\theoremstyle{plain}
\newtheorem{thm}{Theorem}[section]
\newtheorem{lemma}{Lemma}[section]
\newtheorem{cor}{Corollary}[section]
\newtheorem{prop}{Proposition}[section]
\theoremstyle{remark}
\newtheorem*{rem}{Remark}
\theoremstyle{definition}
\newtheorem*{ex}{Example}
\newcommand{\R}{\mathbb{R}}
\newcommand{\N}{\mathbb{N}}
\newcommand{\C}{\mathbb{C}}
\newcommand{\pol}{\mathcal{P}}
\newcommand{\inv}{\mathcal{I}}
\newcommand{\har}{\mathcal{H}}
\newcommand{\kk}{\mathfrak{k}}
\newcommand{\g}{\mathfrak{g}}
\newcommand{\p}{\mathfrak{p}}
\newcommand{\q}{\mathfrak{q}}
\newcommand{\h}{\mathfrak{h}}
\newcommand{\U}{\mathcal{U}}
\newcommand{\A}{\mathcal{A}}
\newcommand{\lspan}{\operatorname{span}}
\newcommand{\Ker}{\operatorname{Ker}}
\newcommand{\End}{\operatorname{End}}
\begin{document}

\title[Separation of Variables in the Non-stable Range]{Separation of Variables for Scalar-valued Polynomials in the Non-stable Range}

\author{Daniel Be\v dat\v s}

\address{Charles University, Faculty of Mathematics and Physics, Mathematical Institute\\
Sokolovsk\'a 83, 186 75 Praha, Czech Republic}

\curraddr{ISTA (Institute of Science and Technology Austria), Am Campus 1, 3400 Klosterneuburg, Austria}

\email{daniel.bedats@ist.ac.at}

\thanks{The support of the grant GACR 20-114735 is gratefully acknowledged.}

\begin{abstract}
    Any complex-valued polynomial on $(\R^n)^k$ decomposes into an algebraic combination of $O(n)$-invariant polynomials and harmonic polynomials. This decomposition, separation of variables, is granted to be unique if $n \geq 2k-1$. We prove that the condition $n\geq 2k-1$ is not only sufficient, but also necessary for uniqueness of the separation. Moreover, we describe the structure of non-uniqueness of the separation in the boundary cases when $n = 2k-2$ and $n=2k-3$.

Formally, we study the kernel of a multiplication map $\phi$ carrying out separation of variables. We devise a general algorithmic procedure for describing $\Ker{\phi}$ in the restricted non-stable range $k \leq n < 2k-1$. In the full non-stable range $n < 2k-1$, we give formulas for highest weights of generators of the kernel as well as formulas for its Hilbert series. Using the developed methods, we obtain a list of highest weight vectors generating $\Ker{\phi}$.
\end{abstract}

\subjclass{30G35, 17B10}

\keywords{Separation of variables, spherical harmonics, generalized Verma modules}

\maketitle

\section{Introduction}
Every complex-valued polynomial $P$ in the Euclidean space $\R^n$ admits a \textit{unique} decomposition 
\[P = H_0 + r^2H_1 + \dots + r^{2m}H_m,\] where $r^2 = x_1^2 + \dots + x_n^2$ is the square of the Euclidean norm and each $H_i$ is a harmonic polynomial. That is, $\Delta(H_i) = 0$ for the Laplace operator
\[\Delta = \partial_{x_1}^2 + \dots + \partial_{x_n}^2.\]
Here, $x_1,\dots,x_n$ are coordinates on the Euclidean space $\R^n$.
This result, separation of variables, is a cornerstone of the classical theory of spherical harmonics. The underlying symmetry of the problem is given by the real orthogonal group of matrices preserving the Euclidean inner product. 

We can ask about a similar decomposition for polynomials on $(\R^n)^k$, i.e. in multiple vector variables. It is known that such decomposition always exists \cite{Ho1, Ho2} in any dimension $n$ and for any number of variables $k$. However, separation of variables for complex-valued polynomials in multiple vector variables need not be unique. This paper addresses the question of non-uniqueness, which is formally encoded as the kernel of a map $\phi$ defined in (\ref{phi}) below. Following convention \cite{G,Ho2}, we consider the complexified version of the problem with vector variables from $\C^n$ and the underlying symmetry given by the complex orthogonal group $O(n) = O(n,\C)$.

The theory of spherical harmonics can be formulated in a modern setting as an instance of Howe duality \cite{G, GW, Ho1, Ho2}. 
It says that the space $\pol = \pol((\C^n)^k)$ of complex-valued polynomials in $k$ vector variables admits a multiplicity-free decomposition into irreducible representations of the pair $(O(n),\mathfrak{sp}(2k,\C))$, where $\mathfrak{sp}(2k,\C)$ is the complex symplectic Lie algebra. We give an operator realization of $\mathfrak{sp}(2k,\C)$ and a precise formulation of this result in Section \ref{howeduality}. The Howe duality decomposition of $\pol$ involves unitarizable highest weight $\mathfrak{sp}(2k,\C)$-modules $L(\lambda)$. To understand non-uniqueness of separation of variables, we study these modules. The main tool for our exploration is a resolution of $L(\lambda)$ by generalized Verma modules introduced by Enright and Willenbring in \cite{EW}.

To begin studying decomposition of polynomials 
\[ P: (\C^n)^k \longrightarrow \C \]
into invariant and harmonic polynomials, let us identify $(\C^n)^k$ with the space $M_{k \times n} = M_{k \times n}(\C)$ of complex $k\times n$ matrices. Each of the $k$ vector variables of $(\C^n)^k$ then corresponds to a row in $M_{k \times n}$. For $1 \leq i \leq k$ and $1\leq j \leq n$, let $x_{ij}$ denote the coordinate function on $M_{k\times n}$ which maps a matrix to its entry at position $(i,j)$. Thus we are interested in the space 
\[ \pol = \pol(M_{k \times n}) = \C[x_{11},x_{12},\dots,x_{kn}]\] of $\C$-valued polynomials in vector variables $\mathbf{x}_i = (x_{i1},\dots,x_{in})$. When we mention $\pol$ and its special subsets $\inv$ and $\har$ defined below, we understand the dimension $n$ and the number of variables $k$ to be implicit.

We consider the complex orthogonal group $O(n) = O(n,\C)$ of $n\times n$ matrices preserving the symmetric bilinear form $(\cdot,\cdot)$ on $\C^n$ given by
\begin{equation} \label{symform}
    (u,v) = u_1v_n +  u_2v_{n-1} + \dots + u_n v_1, \quad u,v \in \C^n.
\end{equation}
We choose this antidiagonal form so that there is a maximal torus of $O(n)$ formed by its diagonal matrices. The group $O(n)$ acts on the space $\pol$ by
\[ g \cdot P(x) = P(xg), \quad g \in O(n), \quad P \in \pol, \]
where $xg$ is the product of the full matrix variable $x = (x_{ij})$ with $g \in O(n)$.

The ring  of $O(n)$-invariant polynomials in $\pol$ is defined as
\[\inv = \inv(M_{k \times n}) = \{ P\in \pol: g \cdot P = P \text{ for all } g \in O(n)\}.\] The First Fundamental Theorem of invariant theory \cite[p.~240]{GW} says that $\inv$ is generated by quadratic polynomials 
\begin{equation} \label{invariants}
    r_{ij} = (\mathbf{x}_i,\mathbf{x}_j) = x_{i1}x_{jn} + x_{i2}x_{j,n-1} + \dots + x_{in}x_{j1}, \quad 1 \leq i \leq j \leq k. 
\end{equation}
The space $\har = \har(M_{k \times n}) \subset \pol$ of harmonic polynomials is the common kernel of $O(n)$-invariant Laplace operators
\begin{equation} \label{laplacians}
    \Delta_{ij} = \partial_{x_{i1}}\partial_{x_{jn}} + \partial_{x_{i2}}\partial_{x_{j,n-1}} \dots + \partial_{x_{in}}\partial_{x_{j1}},  \quad 1 \leq i \leq j \leq k. 
\end{equation}
Separation of variables for complex-valued polynomials can be formally defined as the problem of deciding if the multiplication map $\phi$ given by
\begin{equation} \label{phi}
    \phi: \inv \otimes \har \rightarrow \pol, \quad I \otimes H \mapsto  IH,
\end{equation}
is a linear isomorphism. Here, $IH$ is the product of polynomials $I \in \inv$ and $H \in \har$. Throughout the paper, the symbol $\otimes$ denotes the tensor product $\otimes_\C$ of complex vector spaces. It is known that $\phi$ is surjective for any values of $n$ and $k$. The multiplication map is not always injective, though, and therefore we are interested in the kernel of $\phi$. This subspace of $\inv \otimes \har$ is the sole focus of this work, so \textit{`the kernel'} always means $\Ker{\phi}$ throughout the paper.

A classical result \cite{Ho2} saying that separation of variables is unique in the stable range $n\geq 2k$ was extended to the semistable range $n\geq 2k-1$ in \cite{Lav}. Building on this method, our paper provides partial results on the open problem of describing $\Ker{\phi}$ in the non-stable range $n < 2k-1$. We show that the kernel of $\phi$ is always non-zero in this range and provide its precise structural description in the boundary non-stable cases $n=2k-2$ and $n=2k-3$. More generally, we adopt an algorithmic procedure from \cite{EW} giving a resolution of $\Ker{\phi}$ by free $\inv$-modules in the restricted non-stable range $k \leq n < 2k-1$. We further introduce a method of finding highest weights of generators for the kernel, which we use in Section \ref{gen} to construct an explicit list of the generators in the full non-stable range $n < 2k-1$. To give a slightly different type of information on $\Ker{\phi}$, we provide a way of calculating its Hilbert series also applicable in the full non-stable range.

The paper is based on author's master thesis \cite{B} defended at Charles University, Prague.

\section{Methods for understanding the kernel}

\subsection{Howe duality decompositions} \label{howeduality}
The starting point of our exploration is the Howe duality $(O(n),\mathfrak{sp}(2k,\C))$. Let  $\g = \p^- \oplus \kk \oplus \p^+$ denote the complex Lie algebra formed by the following $O(n)$-invariant differential operators on $\pol$: 
\begin{align*}
    \p^- &= \lspan_\C \{r_{ij}: 1 \leq i \leq j \leq k\}, \\
     \kk &= \lspan_\C \{h_{ij}: 1 \leq i,j \leq k \},\\
    \p^+ &= \lspan_\C \{\Delta_{ij}: 1 \leq i \leq j \leq k \}.
\end{align*} 
The multiplication operators $r_{ij}$ and the Laplace operators $\Delta_{ij}$ are defined in (\ref{invariants}) and (\ref{laplacians}) above, while the mixed operators $h_{ij}$ are given by 
\[ h_{ij} = x_{i1} \partial_{x_{j1}} + \dots + x_{in}\partial_{x_{jn}} + \frac{n}{2} \delta_{ij}, \quad 1 \leq i,j \leq k. \]
Howe duality says that there is a multiplicity-free decomposition of $\pol$ under the joint action of the pair $(O(n),\g)$, see below. Before formulating the decomposition, we develop necessary conventions regarding the structure of $\g$.

The Lie algebra $\g = \p^- \oplus \kk \oplus \p^+$ is an operator realization of the complex symplectic Lie algebra $\mathfrak{sp}(2k) = \mathfrak{sp}(2k,\C)$ with its subalgebra $\kk$ forming an instance of $\mathfrak{gl}(k) = \mathfrak{gl}(k,\C)$. A Cartan subalgebra common for both $\g$ and $\kk$ is 
\[\h= \lspan{\{h_{11},\dots,h_{kk}\}}.\] 
We choose the basis $-h_{11},\dots,-h_{kk}$ of $\h$ and denote by \[\varepsilon_1,\dots,\varepsilon_k \in \h^*, \quad \varepsilon_i: -h_{jj}\mapsto \delta_{ij},\] its dual basis of $\h^*$. Then $\varepsilon_i + \varepsilon_j$ is a root of $(\g,\h)$ corresponding to the root vector $\Delta_{ij}$ and $-\varepsilon_i-\varepsilon_j$ is a root with root vector $r_{ij}$. 
Moreover, $\varepsilon_i - \varepsilon_j$ is a root with root vector $h_{ji}$ when $i \neq j$. Let $\Delta$ denote the resulting root system of $(\g,\h)$. The subsystem $\Delta_{\kk} = \{ \pm(\varepsilon_i - \varepsilon_j): i < j\}$ forms a root system of $(\kk,\h)$. The elements of $\Delta_{\kk}$ are referred to as compact roots. 

Take $\varepsilon_1 - \varepsilon_{2},\dots,\varepsilon_{k-1}-\varepsilon_k$ and $2\varepsilon_k$ to be the simple roots of $\Delta$ and denote the resulting positive root system by $\Delta^+$. Let $\Delta_{\kk}^+ = \Delta^+ \cap \Delta_\kk$ be the positive compact roots. Then we can speak of highest weights for $\g$ and for $\kk$. Throughout the text, we identify a weight $\lambda \in \h^*$ with its coordinates $(\lambda_1,\dots,\lambda_k)$ with respect to $\varepsilon_1,\dots,\varepsilon_k$.

Now we give decompositions of $\pol = \pol(M_{k \times n})$ and $\har \subset \pol$ under the actions of the dual pairs $(O(n),\mathfrak{sp}(2k))$ and $(O(n),\mathfrak{gl}(k))$, respectively. First, we briefly recall some essential facts about irreducible representations of $O(n)$. A \textit{partition} is a finite weakly decreasing sequence $\lambda = (\lambda_1,\dots,\lambda_m)$ of non-negative integers. We identify a partition $\lambda$ with its \textit{Young diagram}, which is a graphic representation of $\lambda$ as a grid of left-justified rows with $\lambda_i$ boxes in the $i$-th row. The \textit{depth} of $\lambda = (\lambda_1,\dots,\lambda_m)$ is the number of rows in its Young diagram or, in other words, the largest integer $d$ such that $\lambda_d > 0$. For example,

\[ \lambda = (4,3,1,0,0) = \ydiagram{4,3,1} \]
has depth equal to 3.

 Highest weight theory for the special orthogonal group $SO(n)$ gives a classification of its irreducible representations in terms of partitions. As a consequence, the finite-dimensional irreducible representations of $O(n)$ are classified by the set $\Sigma_n$ of Young diagrams having at most $n$ boxes in the first two columns \cite{G, GW, Ho2}. A representation labeled by $\sigma \in \Sigma_n$ is here denoted by $E_\sigma$.

Let $\Sigma_{n,k} \subset \Sigma_n$ denote the set of Young diagrams from $\Sigma_n$ of depth at most $k$. Then $\Sigma_{n,k}$ is the set of labels for irreducible $O(n)$-representations occurring in $\pol(M_{k \times n})$. For more details, see \cite[Lecture 10]{G}. Howe duality \cite{G, Ho1} gives a nice decomposition of $\pol = \pol(M_{k \times n})$ under the commuting actions of $O(n)$ and its dual partner $\mathfrak{sp}(2k)$. Each irreducible $O(n)$-module $E_\sigma$ occurring in $\pol$ has an infinite multiplicity, which is resolved by tensoring with a unique infinite-dimensional $\mathfrak{sp}(2k)$-module of suitable type. More precisely, there is a multiplicity-free decomposition
\[ \pol =  \bigoplus_{\sigma \in \Sigma_{n,k}} E_\sigma \otimes L(\sigma^\sharp)  \]
under the joint action of the pair $(O(n),\mathfrak{sp}(2k))$,
where $L(\lambda)$ is the irreducible highest weight $\mathfrak{sp}(2k)$-module of highest weight $\lambda \in \h^*$. The transformation $\sigma \mapsto \sigma^\sharp$ maps $\Sigma_{n,k}$ bijectively to the set of highest weights for irreducible $\mathfrak{sp}(2k)$-modules occurring in $\pol$. In coordinates with respect to the basis $\varepsilon_1,\dots,\varepsilon_k$, the highest weight $\sigma^\sharp \in \h^*$ has the expression
\[ \sigma^\sharp = (-\sigma_k -\frac{n}{2}, \dots, -\sigma_1 - \frac{n}{2}).\]

The Howe duality decomposition of $\pol$ descends to a decomposition of $\har \subset \pol$ with the action of $\g = \mathfrak{sp}(2k)$ restricted to the action of $\kk = \mathfrak{gl}(k)$. First of all, we note that the set $\Sigma_{n,k}$ also classifies the irreducible $O(n)$-representations occurring in $\har$. In other words, the spaces $\pol$ and $\har$ have the same $O(n)$-spectrum. Moreover, there is  a multiplicity-free decomposition \cite[Lecture 9]{G}
\[ \har = \bigoplus_{\sigma \in \Sigma_{n,k}} E_\sigma \otimes F_{\sigma^\sharp} \]
under the joint action of the pair $(O(n),\mathfrak{gl}(k))$,
where $F_\lambda$ denotes the finite-dimensional irreducible $\mathfrak{gl}(k)$-module of highest weight $\lambda \in \h^*$.

In addition to sharing the $O(n)$-spectrum, the decompositions of $\pol$ and of $\har$ are connected in the following natural sense. Let us take $\sigma \in \Sigma_{n,k}$ and choose a realization of $F_{\sigma^\sharp}$ as a subspace of $\har$. Then it holds that the highest weight $\mathfrak{sp}(2k)$-module $L(\sigma^\sharp)$ is realized in $\pol$ as $L(\sigma^\sharp) = \inv F_{\sigma^\sharp}$, the span of all products of $\inv$ with $F_{\sigma^\sharp}$. Thus Howe duality provides a concrete construction of abstract highest weight modules $L(\lambda)$ in terms of polynomials.

Note that for each $\sigma \in \Sigma_{n,k}$, the module $L(\sigma^\sharp)$ is not only an irreducible highest weight module as usually stated in results concerning Howe duality, but also unitarizable. This is thanks to the Fischer inner product $\langle \cdot | \cdot \rangle$ on $\pol$ given by $\langle P|Q \rangle = (\partial(P)Q^*)(0)$, where $Q^*(x) = \overline{Q(\overline{x})}$ and $\overline{z}$ denotes the complex conjugate of $z$. 

\begin{rem}
    There is an easy proof of surjectivity of the multiplication map $\phi: \inv \otimes \har \rightarrow \pol$ using orthogonality with respect to $\langle \cdot | \cdot \rangle$. If we denote by $\inv_+ \subset \inv$ the space of invariants of positive degree, then there is an orthogonal decomposition $\pol = \har \oplus \inv_+\pol$, see \cite[Theorem 9.2]{G}. Surjectivity of $\phi$ follows.
\end{rem}

\subsection{Generalized Verma modules} \label{GVM}
The Lie algebras $\g = \mathfrak{sp}(2k)$ and $\kk = \mathfrak{gl}(k)$ constitute an example of a Hermitian symmetric pair, which allows for application of rich theory to our exploration. In particular, if $(\g,\kk)$ is a Hermitian symmetric pair and $\q$ is a maximal parabolic subalgebra of $\g$ whose Levi subalgebra is $\kk$, then strong results describing the unitarizable highest weight modules $L(\lambda)$ are available. To use these results, we have to define generalized Verma modules $N(\lambda)$ associated to $\q$. For an overview of the theory, we refer to \cite{Hum}. 

Consider the maximal parabolic subalgebra $\q = \kk \oplus \p^+$ of $\g$, whose Levi subalgebra is indeed $\kk$. Let $\lambda \in \h^*$ be a $\kk$-dominant integral weight and let $F_\lambda$ be an irreducible $\kk$-module of highest weight $\lambda$ and let $\p^+$ act on $F_\lambda$ trivially, i.e. by $\p^+ \cdot F_\lambda = 0$. Then $F_\lambda$ is a $\q$-module and we define the generalized Verma module of highest weight $\lambda$ with respect to the parabolic subalgebra $\q$ as the induced $\g$-module
\[ N(\lambda) = \U(\g) \otimes_{\U(\q)} F_\lambda, \]
where $\U(\mathfrak{a})$ is the universal enveloping algebra of a Lie algebra $\mathfrak{a}$.
As a left $S(\p^-)$-module, $N(\lambda)$ is isomorphic to $S(\p^-)\otimes F_\lambda$ by the Poincar\'e-Birkhoff-Witt (PBW) Theorem \cite[2.1.11]{D} thanks to the decomposition $\g = \p^- \oplus \q$ and the commutativity of $\p^-$. Moreover, $N(\lambda)$ contains a unique proper maximal submodule and thus admits a unique irreducible quotient, the irreducible highest weight module $L(\lambda)$. Note that in \cite{DES} and \cite{EW}, symbols $N(\lambda + \rho)$ and $L(\lambda+\rho)$ are used in place of $N(\lambda)$ and $L(\lambda)$. 

In Section \ref{GVMforA}, we introduce similar modules $N_\A(\lambda)$  associated to the algebra $\A$ of $O(n)$-invariant differential operators on $\pol(M_{k\times n})$, which forms a natural quotient of $\U(\g)$.

\subsection{Invariance of $\phi$ }

Let us describe $\mathfrak{sp}(2k)$-module structures on $\inv \otimes \har$ and on $\pol = \pol(M_{k\times n})$ and then observe that $\phi: \inv \otimes \har \rightarrow \pol$ intertwines them. The action on $\pol$ is straightforward; the symplectic Lie algebra $\g = \mathfrak{sp}(2k)$ acts on $\pol$ by the differential operators of which it consists. To endow $\inv \otimes \har$ with a $\g$-module structure, we have to do a bit more work.

Let $\End(\pol)$ denote the algebra of linear endomorphisms of $\pol$. Consider the subalgebra $\A \subset \End(\pol)$ generated by operators $r_{ij},h_{ij},\Delta_{ij}$ spanning the Lie algebra $\g$. Further let $\mathcal{D}(\pol)$ denote the Weyl algebra on $\pol$, i.e. the algebra of polynomial-coefficient differential operators on $\pol$. Then $\A = \mathcal{D}(\pol)^{O(n)}$ is exactly the subalgebra of $\mathcal{D}(\pol)$ consisting of $O(n)$-invariant differential operators, see e.g. \cite[Theorem 7]{Ho1}. The inclusion of $\g$ in $\End(\pol)$ induces an algebra map $\psi: \U(\g) \rightarrow \End(\pol)$ whose image is the algebra $\A$. Thus $\A$ is the quotient of $\U(\g)$ by the two-sided ideal $\Ker{\psi}$.  For details on the structure of $\Ker{\psi}$, we refer to \cite[Section 5]{MW}. We can think of $\A$ as an enveloping algebra of $\g$ within the Weyl algebra $\mathcal{D}(\pol)$. The key takeaway for us is that the algebra $\A$ admits a natural left $\U(\g)$-module structure given by composition of operators in $\A$:
\[X \cdot A = \psi(X)A, \quad X \in \U(\g), \quad A \in \A.\] 
Similarly, $\A$ forms a right $\U(\g)$-module with composition from the right. Recall the parabolic subalgebra $\q = \kk \oplus \p^+$ of $\g$ spanned by operators $\Delta_{ij}$ and $h_{ij}$. Then the space $\har$ of harmonics forms a left $\U(\q)$-module, while $\A$ forms a right $\U(\q)$-module since $\U(\q)\subset \U(\g)$. Therefore we can form the tensor product $\A \otimes_{\U(\q)} \har$, which is naturally a left $\U(\g)$-module.
 
Observe that $\inv \subset \A$ once we identify an invariant $I \in \inv$  with the multiplication operator $I: P \mapsto IP$ on $\pol$. This inclusion induces an embedding $i: \inv \otimes \har \rightarrow \A \otimes \har$. On the other hand, there is a natural surjection of tensor products $q: \A \otimes \har \rightarrow \A \otimes_{\U(\q)} \har$. The composition
\[ \varphi: \inv \otimes \har \rightarrow \A \otimes_{\U(\q)} \har, \quad \varphi = q \circ i,\]
is an isomorphism of left $\inv$-modules by the PBW Theorem. This endows the space $\inv \otimes \har$ with the left $\U(\g)$-module structure of $\A \otimes_{\U(\q)} \har$, which is in agreement with the natural left $\inv$-module structure on $\inv \otimes \har$.

Finally, observe that the map $\phi: \inv \otimes \har \rightarrow \pol$ intertwines the actions of $\g$. Indeed, the composition of two operators $X \in \g$ and $I \in \inv$ on $\pol$ is the same as their product in $\A$ and hence    
\[ X \cdot (IH) = (XI)H =\phi ((XI) \otimes_{\U(\q)} H), \quad X \in \g, \quad I \in \inv, \quad H \in \har.\]
Note that $\phi$ also intertwines the natural tensor product action of $O(n)$ on $\inv \otimes \har$ with the action of $O(n)$ on $\pol$. Of course, the action on $\inv \otimes \har$ is trivial on the factors from $\inv$ and hence given by $g \cdot (I \otimes H) = I \otimes (g\cdot H)$ for $g \in O(n), 
\: I \in \inv, \: H \in \har$.

\subsection{Generalized Verma modules for $\A$} \label{GVMforA}

When we compare the definition of a generalized Verma module $N(\lambda)$ and the module structure on $\inv \otimes \har$, there is an important similarity; both are a result of tensoring over the parabolic algebra $\U(\q)$ with a $\q$-module on which $\p^+$ acts trivially. Now we make this connection more formal. As in Section \ref{GVM}, let $\lambda \in \h^*$ be a $\kk$-dominant integral weight and take an irreducible $\kk$-module $F_\lambda$ on which $\p^+$ acts trivially. By definition and by Howe duality, we know that $\har$ splits into such $\kk$-modules.

Define the generalized Verma module for $\A$ with respect to the parabolic subalgebra $\q$ as the induced $\g$-module
\[ N_\A(\lambda) = \A \otimes_{\U(\q)} F_\lambda. \] 
Then $N_\A(\lambda)$ is a quotient of $N(\lambda)$ since $\A$ is a quotient of $\U(\g)$. Moreover, there is a left $\inv$-module isomorphism $N_\A(\lambda) \cong \inv \otimes F_\lambda$ by the PBW Theorem. Modules $N_\A(\lambda)$ can be viewed as parabolic versions of Verma modules for algebras of invariant differential operators studied in \cite{S}. Note that our situation provides concrete realizations of these abstract modules as tensor products $\inv\otimes F_\lambda$ of two spaces of special polynomials. 

We have defined the $\g$-module structure on $\inv \otimes \har$ in agreement with the definition of modules $N_\A(\lambda)$. As a consequence, Howe duality for $\har$ gives the $(O(n),\mathfrak{sp}(2k))$-module decomposition
\begin{equation} \label{invhar}
     \inv \otimes \har = \bigoplus_{\sigma \in \Sigma_{n,k}} E_\sigma \otimes N_\A(\sigma^\sharp).
\end{equation}

\subsection{Decomposition of $\Ker{\phi}$}
Let us decompose the kernel of the multiplication map $\phi$ into isotypic components for $O(n)$. Take $\sigma \in \Sigma_{n,k}$ and choose a joint highest weight vector $0 \neq H_\sigma \in \har$ for the pair $(SO(n), \mathfrak{gl}(k))$ of weight $\sigma^\sharp$ with respect to $\mathfrak{gl}(k)$ which generates an instance of $E_\sigma$ in $\pol$ under the action of $O(n)$. Then $F_{\sigma^\sharp} \subset \har$ is realized in the space of harmonics as the $\mathfrak{gl}(k)$-module generated by $H_\sigma$ and we can consider the restricted multiplication map
\[ \phi_\sigma: N_\A(\sigma^\sharp) = \inv \otimes F_{\sigma^\sharp} \rightarrow \inv F_{\sigma^\sharp} = L(\sigma^\sharp), \quad I\otimes H \mapsto IH.\]

Having chosen realizations of both $E_\sigma$ and $F_{\sigma^\sharp}$ as subsets of $\har$ generated by $H_\sigma$ under the actions of $O(n)$ and $\mathfrak{gl}(k)$, respectively, we can identify the $\sigma$-isotypic component $\har_\sigma$ in $\har$ with $E_\sigma \otimes F_{\sigma^\sharp}$. Consider the restriction $\phi|_{\inv \otimes \har_\sigma}: \inv \otimes \har_\sigma \rightarrow \pol$ of $\phi$ to $\inv \otimes \har_\sigma$, which is quite different from $\phi_\sigma$ since it involves $E_\sigma$ in its domain. The identification $\har_\sigma \cong E_\sigma \otimes F_{\sigma^\sharp}$ allows us to express $\phi|_{\inv \otimes \har_\sigma}$ as 
\[id \otimes \phi_\sigma: E_\sigma \otimes N_\A(\sigma^\sharp) \rightarrow E_\sigma \otimes L(\sigma^\sharp), \]
where $id: E_\sigma \rightarrow E_\sigma$ is the identity. As a consequence, we obtain the decomposition
\[ \Ker{\phi} =  \bigoplus_{\sigma \in \Sigma_{n,k}} \Ker{(\phi|_{\inv \otimes \har_\sigma})} = \bigoplus_{\sigma \in \Sigma_{n,k}} E_\sigma \otimes \Ker{\phi_\sigma}\] under the action of the pair $(O(n),\mathfrak{sp}(2k))$, which reduces the problem of describing $\Ker{\phi}$ to understanding each $\Ker{\phi_\sigma}$. Of course, it may happen that for some $\sigma \in \Sigma_{n,k}$, the kernel of $\phi_\sigma$ is zero.

Before studying the restricted multiplication map $\phi_\sigma$, we would like to decide when the generalized Verma modules $N(\lambda)$ and $N_\A(\lambda)$ coincide. If $n \geq 2k$, then it holds \cite[Theorem 5.2]{MW} that $\U(\g) = \A$ and the equality $N(\lambda) = N_\A(\lambda)$ is automatic. However, even if $\U(\g)$ and $\A$ are distinct, the respective generalized Verma modules can be the same. Indeed, the ring $\inv$ of invariants is generally a quotient of $S(\p^-)$ and equal to $S(\p^-)$ if and only if $n \geq k$. The precise structure of $\inv$ is the content of the Second Fundamental Theorem of invariant theory (SFT), see \cite[p.~561]{GW}. By definition, the condition $\inv = S(\p^-)$ is sufficient for $N(\lambda) = N_\A(\lambda)$ to hold for each $\lambda$. So if $n\geq k$, the map \[\phi_\sigma: \inv \otimes F_{\sigma^\sharp} \rightarrow \inv F_{\sigma^\sharp}\] is an instance of the natural projection of the generalized Verma module $N(\sigma^\sharp)$ onto $L(\sigma^\sharp)$. Therefore $\Ker{\phi_\sigma}$ is isomorphic to the maximal submodule in $N(\sigma^\sharp)$ in this case, which is particularly useful since the modules $N(\lambda)$ are generally understood better than the modules $N_\A(\lambda)$.

In general, the map $\phi_\sigma$ is a surjection of $N_\A(\sigma^\sharp)$ onto $L(\sigma^\sharp)$. It fits into the commutative diagram of $\g$-module maps below, where $\pi_\sigma$ is the surjection of $ N(\sigma^\sharp)$ onto $N_\A(\sigma^\sharp)$ induced by the natural projection $\U(\g)\rightarrow \A$.
\[ \begin{tikzcd}
 N(\sigma^\sharp)  \arrow{r}{\pi_\sigma} \arrow[swap]{dr}{\widetilde{\phi_\sigma}} & N_\A(\sigma^\sharp) \arrow{d}{\phi_\sigma} \\
     & L(\sigma^\sharp)
  \end{tikzcd} \]
   Instead of $\phi_\sigma$, the composition $\widetilde{\phi_\sigma} = \phi_\sigma \circ \pi_\sigma$ fits the setting as the projection $N(\sigma^\sharp) \rightarrow L(\sigma^\sharp)$. Thus $\Ker{\phi_\sigma} = \pi_\sigma( \Ker{\widetilde{\phi_\sigma})}$ is generally isomorphic to the quotient of the maximal submodule in $N(\sigma^\sharp)$ by the kernel of $\pi_\sigma$. Note that it might happen that $\pi_\sigma$ is an isomorphism, but $\phi_\sigma$ is not, and vice versa.

   \subsection{Resolution of unitarizable highest weight modules}
   
   In Theorem \ref{resolution} below, we introduce a resolution of $L(\sigma^\sharp)$ in terms of generalized Verma modules adapted from Enright and Willenbring \cite[p. 340]{EW}, which is our main tool for understanding the kernel of $\phi_\sigma$. To be able to formulate the theorem, we need to develop some more notation.

   Let $\Delta_n^+$ denote the set of positive non-compact roots so that $\Delta^+$ is the disjoint union $\Delta_{\kk}^+ \cup \Delta_n^+$. Let $W$ be the Weyl group of $\Delta$. Given a root $\alpha \in \Delta$, its corresponding root reflection is denoted by $s_\alpha  \in W$. A coroot for $\alpha$ is the element $\alpha^\vee \in \h$ satisfying \[\mu(\alpha^\vee) = \frac{2(\alpha,\mu)}{(\alpha,\alpha)}, \quad \mu \in \h^*,\]
   where $(\cdot,\cdot)$ is the Killing form on $\g$. Let $\rho$ denote the half-sum of positive roots in $\Delta$.

   Let $\lambda \in \h^*$ be a $\kk$-dominant integral weight. Define the set $\Psi_\lambda$ of singularities of $\lambda + \rho$ as the set of roots orthogonal to $\lambda +\rho$: 
   \[\Psi_\lambda = \{ \alpha \in \Delta : (\alpha, \lambda + \rho) = 0 \}. \]
    Let $W_\lambda$ be the subgroup of $W$ generated by the root reflections $s_\alpha$ with $\alpha \in \Delta_n^+$ satisfying the following conditions:
    \begin{enumerate}[(i)]
        \item $ (\lambda + \rho, \alpha^\vee) \in \N^+ = \{1,2,3,\dots\}$,
        \item $\alpha$ is orthogonal to $\Psi_\lambda$,
        \item if $\Psi_\lambda$ contains a long root, then $\alpha$ is short. 
    \end{enumerate}
    From now on, we refer to this list simply as conditions (i)-(iii). Let $\Delta_\lambda$ denote the set of roots $\alpha \in \Delta$ for which $s_\alpha \in W_\lambda$. Then $\Delta_\lambda$ is an abstract root system. Set $\Delta_\lambda^+ = \Delta_\lambda \cap \Delta^+$ and $\Delta_{\lambda,\kk} = \Delta_\lambda \cap \Delta_\kk$ with positive roots $\Delta_{\lambda,\kk}^+ = \Delta_{\lambda,\kk} \cap \Delta^+$. We further define $W^\kk_\lambda = \{ w \in W_\lambda : \Delta_{\lambda, \kk}^+ \subset w \Delta_\lambda^+\}$ and
    \[ W_\lambda^{\kk,i} = \{ w \in W_\lambda^\kk : \text{card}(w\Delta_\lambda^+ \cap - \Delta_\lambda^+) = i\}.\]
    Finally, if $\mu \in \h^*$ is any $\kk$-integral weight, let $\mu^+$ denote the unique $\kk$-dominant integral weight in its orbit of the Weyl group for $\Delta_\kk$.
    
    Now we are ready to describe the resolution of Enright and Willenbring, which can be viewed as an analogue of the Bernstein-Gelfand-Gelfand (BGG) resolution for unitarizable highest weight modules in terms of generalized Verma modules.

\begin{thm} \cite[p.~340]{EW} \label{resolution}
    Let $\lambda \in \h^*$ be a $\kk$-dominant integral weight. Let $L(\lambda)$ be a unitarizable highest weight $\mathfrak{sp}(2k)$-module of highest weight $\lambda$. Then $L(\lambda)$ admits a resolution
    \[ 0 \rightarrow Z_{r_\lambda}^\lambda \rightarrow \dots \rightarrow Z_1^\lambda \rightarrow N(\lambda) \rightarrow L(\lambda) \rightarrow 0,\]
    where $r_\lambda = \textnormal{card}(\Delta_\lambda^+ \cap \Delta_n^+)$ and $Z_i^\lambda = \sum_{w \in W_\lambda^{\kk,i}} N((w(\lambda+\rho))^+ - \rho)$ for $1 \leq i \leq r_\lambda$.
\end{thm}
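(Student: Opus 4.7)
The plan is to model the proof on the parabolic Bernstein--Gelfand--Gelfand (BGG) resolution, suitably adapted to unitarizable highest weight modules in the Hermitian symmetric setting. The classical parabolic BGG resolution resolves a finite-dimensional irreducible $\g$-module by generalized Verma modules $N(w \cdot \lambda)$ indexed by shortest coset representatives $w \in W^\kk \subset W/W_\kk$, with differentials assembled from standard homomorphisms between generalized Verma modules. Since $L(\lambda)$ is here generally infinite-dimensional, the naive BGG complex is too large: the index set must be shrunk to $W_\lambda^{\kk,\bullet}$, where conditions (i)--(iii) precisely identify the reflections that still contribute nontrivial composition factors to the maximal proper submodule of $N(\lambda)$.

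First, I would interpret conditions (i)--(iii) via composition-factor analysis. Condition (i), $(\lambda+\rho,\alpha^\vee)\in\N^+$, is the classical BGG-style requirement guaranteeing an embedding $N(s_\alpha\cdot\lambda)\hookrightarrow N(\lambda)$. Condition (ii), orthogonality to $\Psi_\lambda$, ensures that $s_\alpha\cdot\lambda$ reaches a genuinely new $\kk$-dominant weight after applying $(\cdot)^+$, rather than being absorbed by the stabilizer of $\lambda+\rho$. Condition (iii), the short-root exception in the presence of a long root in $\Psi_\lambda$, is a specifically symplectic (type C) phenomenon: the Shapovalov/Jantzen data show that the corresponding long-root embedding factors through shorter ones and should be discarded to keep the resolution minimal. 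Together these conditions cut $W^\kk$ down to $W_\lambda^\kk$ and induce its length stratification $W_\lambda^{\kk,i}$.

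Second, I would build the complex itself. For each $i$, set $Z_i^\lambda = \bigoplus_{w \in W_\lambda^{\kk,i}} N((w(\lambda+\rho))^+-\rho)$ and define the differential $d_i:Z_i^\lambda \to Z_{i-1}^\lambda$ as an alternating sum of standard generalized-Verma homomorphisms over length-decreasing covers in the Bruhat order restricted to $W_\lambda^\kk$, with signs in Lepowsky's fashion so that $d^2=0$. Surjectivity $Z_1^\lambda \twoheadrightarrow \Ker(N(\lambda)\to L(\lambda))$ then follows because the generators of the maximal submodule of $N(\lambda)$ are, by construction, the highest weight vectors produced by the generating reflections of $W_\lambda$.

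The main obstacle will be proving exactness in the interior of the complex. My approach would be to combine a character-theoretic argument---matching $\sum_i (-1)^i \operatorname{ch}(Z_i^\lambda)$ with $\operatorname{ch}(L(\lambda))$ as given by the Enright--Howe--Wallach classification of unitarizable highest weight modules---with a comparison to the Koszul resolution of the abelian nilradical $\p^-$, which bounds the length and shape of any BGG-type resolution from above. Pinning down genuine exactness rather than merely the correct Euler characteristic is the central technical difficulty and is precisely the content of the argument in \cite{EW}; the short-root subtlety in condition (iii) is what distinguishes this from a routine application of parabolic BGG machinery.
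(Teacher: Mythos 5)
The paper does not prove Theorem \ref{resolution}: it is quoted directly from Enright and Willenbring \cite{EW}, so there is no internal argument to compare against. Judged on its own terms, your proposal is an outline rather than a proof. You correctly identify the shape of the statement as a BGG-type resolution with index set cut down to $W_\lambda^{\kk,\bullet}$, and your reading of conditions (i)--(iii) is reasonable, but you explicitly concede that exactness of the complex --- which is the entire content of the theorem --- ``is precisely the content of the argument in \cite{EW}''. Deferring the central step to the very reference being proved leaves the proof with no content.

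Beyond that, the route you sketch for exactness would not go through as stated. An Euler-characteristic match between $\sum_i(-1)^i\operatorname{ch}(Z_i^\lambda)$ and $\operatorname{ch}(L(\lambda))$ cannot by itself establish exactness of a specific complex, and the character formula you want to feed into it is not supplied by the Enright--Howe--Wallach classification, which gives unitarizability criteria rather than characters; the alternating-sum character formula for unitarizable $L(\lambda)$ is essentially equivalent to the existence of this resolution, so the argument as proposed is circular. The actual mechanism in \cite{EW} is different in kind: $\Delta_\lambda$ (defined by conditions (i)--(iii)) is the root system of a smaller Hermitian symmetric pair with Weyl group $W_\lambda$, the module $L(\lambda)$ corresponds under Enright's completion/reduction functors to a finite-dimensional module for that reduced pair, and the resolution is obtained by transferring Lepowsky's classical parabolic BGG resolution of that finite-dimensional module back to $\g$. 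In particular, conditions (i)--(iii) are the definition of the reduced system rather than an a posteriori pruning of $W^\kk$ by composition-factor analysis. That transfer argument is the missing idea; without it, your proposal establishes neither the exactness nor even the well-definedness of the differentials you posit.
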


The application of this result to our problem is straightforward. We can calculate the resolution of  $L(\sigma^\sharp)$ for each $\sigma \in \Sigma_{n,k}$. If $n\geq k$ and $\lambda = \sigma^\sharp$, we obtain a resolution
\[ 0 \rightarrow Z_{r_\lambda}^\lambda \rightarrow \dots \rightarrow Z_1^\lambda \rightarrow \Ker{\phi_\sigma} \rightarrow 0\]
of $\Ker{\phi_\sigma}$ by free $\inv$-modules $Z_i^\lambda$. Taking a direct sum over $\Sigma_{n,k}$, we obtain a structural description of the kernel of $\phi$. Even if $n < k$, the resolution of $L(\sigma^\sharp)$ provides useful information and can be used for example to compute the Hilbert series of the kernel, see Section \ref{hilbert}.

\subsection{Highest weights of generators of $\Ker{\phi}$} \label{generators}
In the general case, we wish to at least find a set of generators for the kernel of $\phi$. This information is stored in the first resolving module $Z_1^\lambda$. Indeed, for each $\sigma \in \Sigma_{n,k}$, the kernel of $\phi_\sigma: \inv \otimes F_{\sigma^\sharp} \rightarrow L(\sigma^\sharp)$ is a quotient of the kernel of $\widetilde{\phi_\sigma}: N(\sigma^\sharp) \rightarrow L(\sigma^\sharp)$, which is in turn isomorphic to a quotient of $Z_1^\lambda$. There is a quick way \cite[p.~24]{DES} of calculating $Z_1^\lambda$ which does not require finding the full root systems $\Delta_\lambda$ and $\Delta_{\lambda,\kk}$.

Fix a $\kk$-dominant integral weight $\lambda \in \h^*$ and denote by $\Gamma_\lambda$ the set of non-compact positive roots $\alpha \in \Delta_n^+$ satisfying conditions (i)-(iii). We consider the usual partial order on $\Delta^+$ given by $\alpha < \beta$ if and only if $\beta- \alpha \in \Delta^+$.

\begin{prop} \label{propgen}
    Let $L(\lambda)$ be a unitarizable highest weight module. Then the set $\Gamma_\lambda$ is either empty, or it has a unique minimum $\gamma$ such that $Z_1^\lambda = N((s_{\gamma}(\lambda + \rho))^+-\rho)$.
\end{prop}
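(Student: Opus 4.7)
The plan is to identify $W_\lambda^{\kk,1}$ combinatorially and then exploit the Hermitian symmetric structure of $(\g,\kk)$ to locate a minimum of $\Gamma_\lambda$.

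First I would show that $W_\lambda^{\kk,1}$ corresponds bijectively to the non-compact simple roots of $\Delta_\lambda^+$. Indeed, any $w \in W_\lambda$ sending exactly one element of $\Delta_\lambda^+$ to $-\Delta_\lambda^+$ must be a simple reflection $s_\alpha$ for some simple root $\alpha$ of $\Delta_\lambda^+$, and the requirement $\Delta_{\lambda,\kk}^+ \subset s_\alpha \Delta_\lambda^+$ is equivalent to $\alpha \notin \Delta_{\lambda,\kk}^+$, i.e. $\alpha$ is non-compact. By Theorem \ref{resolution}, the proposition thus reduces to showing that $\Delta_\lambda^+$ admits at most one non-compact simple root, and that this root equals $\min \Gamma_\lambda$.

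Next I would use that $(\g,\kk)$ is Hermitian symmetric: $\Delta$ has exactly one non-compact simple root $2\varepsilon_k$, and $\p^+$ being abelian means every non-compact positive root of $\Delta$ contains this simple root with multiplicity $1$. I expect the subsystem $\Delta_\lambda$ arising from a unitarizable $\lambda$ to inherit this Hermitian-symmetric pattern, so that $\Delta_\lambda^+$ likewise admits a unique non-compact simple root $\gamma$ (when non-empty). If so, every $\alpha \in \Delta_\lambda \cap \Delta_n^+$ has the form $\gamma$ plus a sum of positive compact roots of $\Delta_\lambda$, giving $\gamma \leq \alpha$ in the $\Delta^+$-order. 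Since $\Gamma_\lambda \subset \Delta_\lambda \cap \Delta_n^+$ and $\gamma$ can be checked to satisfy (i)--(iii), we get $\gamma = \min \Gamma_\lambda$ and $W_\lambda^{\kk,1} = \{s_\gamma\}$. Theorem \ref{resolution} then yields $Z_1^\lambda = N((s_\gamma(\lambda+\rho))^+ - \rho)$. In the degenerate case $\Gamma_\lambda = \emptyset$, the group $W_\lambda = \{1\}$ forces $W_\lambda^{\kk,1} = \emptyset$ and $Z_1^\lambda = 0$.

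The main obstacle is verifying that $\Delta_\lambda$ actually inherits the Hermitian-symmetric pattern, i.e. that $\Delta_\lambda^+$ has at most one non-compact simple root. This requires handling conditions (i)--(iii) simultaneously, with condition (iii) playing a critical role precisely when $\Psi_\lambda$ contains a long root: it then prevents additional non-compact simple roots from arising by cutting a potential $C$-type factor of $\Delta_\lambda$ down to a $D$ or $A$ factor. I plan to verify this via the combinatorial analysis of Davidson, Enright, and Stanke \cite[p.~24]{DES}.
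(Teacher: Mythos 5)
The paper offers no proof of this proposition at all: it is quoted verbatim from Davidson--Enright--Stanke \cite[p.~24]{DES}, so your proposal is in effect a reconstruction of the cited source rather than an alternative to an argument in the paper. Your reconstruction is sound and follows the intended route: the identification of $W_\lambda^{\kk,1}$ with reflections in the non-compact simple roots of $\Delta_\lambda^+$ is correct, and the fact you defer to \cite{DES} --- that $(\Delta_\lambda,\Delta_{\lambda,\kk})$ is again of Hermitian symmetric type, so $\Delta_\lambda^+$ has at most one non-compact simple root $\gamma$ and every root of $\Delta_\lambda\cap\Delta_n^+$ dominates it --- is exactly the structural input that reference supplies. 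The one step you pass over too quickly is the assertion that $\gamma$ itself satisfies (i)--(iii): orthogonality to $\Psi_\lambda$ and integrality follow formally from $s_\gamma\in W_\lambda$, but the positivity $(\lambda+\rho,\gamma^\vee)\in\N^+$ in condition (i) does not, and a priori the simple roots of the generated system $\Delta_\lambda$ need not lie in the generating set $\Gamma_\lambda$; this is again settled by the combinatorics of \cite{DES}, on which both your argument and the paper ultimately rest.
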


So if we manage to find a non-zero element $v \in \Ker{\phi_\sigma}$ of weight $\lambda^\prime = (s_{\gamma}(\lambda + \rho))^+-\rho$, we can conclude that $v$ generates the kernel of $\phi_\sigma$ as a $\g$-module. In Section \ref{gen}, we use this approach to find generators of the kernel of $\phi$ for any number of variables in any dimension. Moreover, the action of the Levi subalgebra $\kk=\mathfrak{gl}(k)$ generates an instance of $F_{\lambda^\prime}$ in $\Ker{\phi_\sigma}$ and so the kernel of $\phi_\sigma$ is an $\inv$-module quotient of the free module $N_\A(\lambda^\prime)=\inv \otimes F_{\lambda^\prime}$. 

\subsection{Hilbert series of $\Ker{\phi}$} \label{hilbert}
In addition to the generating set, we can compute the Hilbert series of the kernel in the general cases when explicit structure through Theorem \ref{resolution} is unavailable.

All modules encountered in this paper inherit a grading by non-negative integers from the natural grading of the symmetric algebra $S(\p^-)$. More precisely, any generalized Verma module $N(\lambda) = S(\p^-) \otimes F_\lambda$ admits the grading $N(\lambda)^i = S(\p^-)^i \otimes F_\lambda$, which induces a grading of the irreducible quotient $L(\lambda)$. Moreover, the ring $\inv$ of invariants inherits a grading from $S(\p^-)$ and for each $\sigma \in \Sigma_{n,k}$, this induces a grading on $\inv \otimes F_{\sigma^\sharp}$. Therefore $\Ker{\phi_\sigma}$ admits a grading as its submodule. Let us remark that the grading of harmonic polynomials given by their homogeneous degree does not play a role here; regardless of $\sigma$, the space $F_{\sigma^\sharp} = \C \otimes F_{\sigma^\sharp}$ always forms the $0$-th graded component of $\inv \otimes F_{\sigma^\sharp}$. Note that for all modules mentioned here, each of the graded components is a finite-dimensional vector space. 

Given an $S(\p^-)$-module $M=\bigoplus_i M^i$ graded by non-negative integers, we define its Hilbert series by
\[H_M(q) = \sum_{i=0}^\infty \dim_\C(M^i) q^i.\]
Hilbert series of generalized Verma modules are easily computable as each $N(\lambda)\cong S(\p^-)\otimes F_\lambda$ is free over $S(\p^-)$. Theorem \ref{resolution} gives a resolution of $L = L(\lambda)$ by free $S(\p^-)$-modules, which yields a straightforward way of computing $H_L(q)$. Note that \cite{EW} provides an explicit formula for $H_L(q)$ with $L = L(\lambda)$ under the assumption that $\lambda$ is \textit{quasi-dominant}, but this is not satisfied in most of our cases of interest.

We are ultimately interested in the Hilbert series of $\Ker{\phi}$. It can be easily derived from what we have indicated above. Fix $\sigma \in \Sigma_{n,k}$ and consider the map $\phi_\sigma: \inv \otimes F_{\sigma^\sharp} \rightarrow L(\sigma^\sharp)$. The Hilbert series of $\inv \otimes F_{\sigma^\sharp}$ can be obtained directly from the Hilbert series of $\inv$. The SFT \cite[p.~561]{GW} gives the precise structure of $\inv$ as a quotient of $S(\p^-)$ and can be used to calculate $H_\inv (q)$. So if we set $K = \Ker{\phi_\sigma}$ and $L = L(\sigma^\sharp)$, we obtain the formula
\begin{equation} \label{hsformula}
    H_K(q) = \dim(F_{\sigma^\sharp})H_\inv(q) - H_L(q).
\end{equation} 

\subsection{Example: Hilbert series in dimension two} \label{hilbertex}
Let us illustrate the calculation of Hilbert series in the case $n=2, \: k=3$. First of all, we compute $H_\inv(q)$ using the SFT, which says that in this case $\inv$ is a quotient of $S(\p^-)$ by the principal ideal generated by the determinant of the symmetric $3\times 3$ matrix $(r_{ij})$. The Hilbert series of this principal ideal equals the Hilbert series of $S(\p^-)$ shifted by the degree of the determinant. Therefore 
\[H_\inv(q) = H_{S(\p^-)}(q) - q^3H_{S(\p^-)}(q) = \frac{1+q+q^2}{(1-q)^5}.\]
Note that the SFT is hidden implicitly in the resolution of Theorem \ref{resolution} as well, which is another way of obtaining $H_\inv(q)$. Indeed, consider the trivial label $\sigma = (0)$, then $\inv = L(\sigma^\sharp)$ and the resolution of $L(\sigma^\sharp)$ describes $\inv$ as the quotient of $N(\sigma^\sharp) = S(\p^-)\otimes F_{\sigma^\sharp}$. To calculate dimensions of the $\mathfrak{gl}(3)$-modules $F_\lambda$, we use the Weyl dimension formula \cite[p.~337]{GW}.

The strategy is straightforward; for each 
\[\sigma \in \Sigma_{2,3} = \Sigma_2 =  \{(d): d \geq 0\} \cup \{(1,1)\},\] we compute the resolution of $L(\sigma^\sharp)$ by generalized Verma modules, from which we easily obtain its Hilbert series. Having calculated $H_\inv(q)$, we use formula (\ref{hsformula}) to finish the task at hand. Note that the exponents of $q$ in the Hilbert series of $L(\lambda)$ are governed by the \textit{level of reduction} of $N(\lambda)$, which is the positive integer capturing the lowest degree of invariants contributing to $\Ker{\phi_\sigma}$. In our setting, it can be also computed as the contraction $\langle\lambda+\rho, \gamma^\vee \rangle$, where $\gamma^\vee$ is the coroot of $\gamma = \min{\Gamma_\lambda}$ from Proposition \ref{propgen}. See \cite[pp.~24-25]{DES} for more details. 

If $\sigma = (1,1)$, the resolution of $L=L(\sigma^\sharp)$ from Theorem \ref{resolution} is a short exact sequence
\[ 0 \rightarrow N(-2,-2,-3) \rightarrow N(-1,-2,-2) \rightarrow L \rightarrow 0. \]
The level of reduction for $L$ is one and equals the exponent of $q$ by which we multiply the Hilbert series for $N(-2,-2,-3)$ in the calculation
\[H_L(q) = H_{N(-1,-2,-2)} - qH_{N(-2,-2,-3)} = \frac{3}{(1-q)^5}.\]
Letting $K=\Ker{\phi_\sigma}$, we apply formula (\ref{hsformula}) and conclude that
\[H_K(q) = 3q\frac{1+q}{(1-q)^5}. \]

For $\sigma = (1)$ and $\sigma = (2)$, the resolution of $L=L(\sigma^\sharp)$ is again a short exact sequence. Hence the maximal submodule of $N=N(\sigma^\sharp)$ is isomorphic to $Z_1^{\sigma^\sharp}$, which is the generalized Verma module $N(-2,-3,-3)$ for $\sigma = (1)$ and $N(-1,-3,-3)$ for $\sigma = (2)$. Therefore the Hilbert series of $K = \Ker{\phi_\sigma}$ is
\[ H_K(q) = \begin{cases} 3q^2\frac{1}{(1-q)^5}, \quad \sigma = (1), \\[5pt]
6q\frac{1+q}{(1-q)^5}, \quad \sigma = (2).
\end{cases}\]
The exponent in $q^2$ for $\sigma = (1)$ is due to the fact that the level of reduction of $N(\sigma^\sharp)$ is $2$. 

Finally, if $\sigma = (3+d)$ for some $d\geq 0$, then the resolution of $L = L(\sigma^\sharp)$ is of length $3$ and the resolving modules $Z_i = Z_i^{\sigma^\sharp}$ are $Z_1 = N(-1,-3,-4-d)$, $Z_2 = N(-2,-4,-4-d)$ and $Z_3 = N(-4,-4,-4-d)$.
For $K=\Ker{\phi_\sigma}$, formula (\ref{hsformula}) yields
\[H_K(q) =  \frac{3}{2}q  \frac{(d+5)(d+2)+2(d+3)q}{(1-q)^5}. \]

\section{Boundary non-stable cases and injectivity of $\phi$}
The method yields some immediate interesting results.

\begin{prop}\label{eventhm}

Let $k \geq 2$ and $n=2k-2$. Then
\[ \textnormal{Ker } \phi \cong \bigoplus_{\sigma} E_\sigma \otimes  \textnormal{Ker } \phi_\sigma \cong \bigoplus_{\sigma} E_\sigma \otimes N(\sigma^\sharp - 2\varepsilon_1)\]
    under the action of the Howe dual pair $(O(n),\mathfrak{sp}(2k))$ with $\sigma$ running over all Young diagrams $\sigma \in \Sigma_{n,k}$ of depth $k-1$ satisfying $\sigma_{k-1} \geq 2.$ 
\end{prop}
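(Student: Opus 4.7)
The plan is to apply the isotypic decomposition $\Ker\phi = \bigoplus_{\sigma \in \Sigma_{n,k}} E_\sigma \otimes \Ker\phi_\sigma$ from Section 2.5 and determine each $\Ker\phi_\sigma$ using Theorem \ref{resolution}. Since $n = 2k-2 \geq k$ for $k \geq 2$, the SFT gives $\inv = S(\p^-)$ and hence $N_\A(\sigma^\sharp) = N(\sigma^\sharp)$, so $\phi_\sigma$ realizes the canonical projection $N(\sigma^\sharp) \to L(\sigma^\sharp)$ whose kernel is the maximal submodule. For each $\sigma \in \Sigma_{n,k}$ I will compute $\Gamma_{\sigma^\sharp}$ and read off the resolving modules.

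From $\rho = (k, k-1, \ldots, 1)$ and $n/2 = k-1$ one finds
\[(\sigma^\sharp + \rho)_i = -\sigma_{k+1-i} + 2 - i,\]
so $(\sigma^\sharp + \rho)_1 = 1 - \sigma_k$ and $(\sigma^\sharp + \rho)_2 = -\sigma_{k-1}$, while consecutive differences $(\sigma^\sharp + \rho)_i - (\sigma^\sharp + \rho)_{i+1} = \sigma_{k-i} - \sigma_{k+1-i} + 1 \geq 1$ make $\sigma^\sharp + \rho$ strictly decreasing; in particular, no compact root lies in $\Psi_{\sigma^\sharp}$. I then split the analysis into four subcases according to the depth of $\sigma$.

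The interesting subcase is depth $k-1$ with $\sigma_{k-1} \geq 2$. Here $(\sigma^\sharp+\rho)_1 = 1$ and $(\sigma^\sharp+\rho)_j \leq -j$ for $j \geq 2$, and inspecting the sums $(\sigma^\sharp+\rho)_i + (\sigma^\sharp+\rho)_j$ rules out every non-compact singularity, giving $\Psi_{\sigma^\sharp} = \emptyset$. The only non-compact positive root satisfying condition (i) is $2\varepsilon_1$, with $\langle\sigma^\sharp + \rho, (2\varepsilon_1)^\vee\rangle = 1$; conditions (ii) and (iii) are vacuous since $\Psi_{\sigma^\sharp}$ is empty. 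Thus $\Gamma_{\sigma^\sharp} = \{2\varepsilon_1\}$, $\Delta_{\sigma^\sharp} = \{\pm 2\varepsilon_1\}$, and $r_{\sigma^\sharp} = 1$. A short computation shows $s_{2\varepsilon_1}(\sigma^\sharp + \rho) - \rho = \sigma^\sharp - 2\varepsilon_1$, which is already $\kk$-dominant precisely because the decisive inequality reduces to $\sigma_{k-1} - \sigma_k \geq 2$, matching the hypothesis. Theorem \ref{resolution} then yields
\[0 \longrightarrow N(\sigma^\sharp - 2\varepsilon_1) \longrightarrow N(\sigma^\sharp) \longrightarrow L(\sigma^\sharp) \longrightarrow 0,\]
so $\Ker\phi_\sigma \cong N(\sigma^\sharp - 2\varepsilon_1)$.

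In each of the remaining three subcases I will verify $\Gamma_{\sigma^\sharp} = \emptyset$, so that the resolution collapses to $N(\sigma^\sharp) \cong L(\sigma^\sharp)$ and $\Ker\phi_\sigma = 0$. For depth $k$ (i.e.\ $\sigma_k \geq 1$), all coordinates of $\sigma^\sharp + \rho$ are non-positive and no non-compact positive root satisfies (i). For depth at most $k-2$ (i.e.\ $\sigma_{k-1} = 0$), the long root $2\varepsilon_2$ lies in $\Psi_{\sigma^\sharp}$, so (iii) forces candidate roots to be short and (ii) forces them orthogonal to $\varepsilon_2$; the remaining candidates $\varepsilon_i + \varepsilon_j$ with $i, j \neq 2$ all fail (i). For depth $k-1$ with $\sigma_{k-1} = 1$, the short root $\varepsilon_1 + \varepsilon_2$ sits in $\Psi_{\sigma^\sharp}$, disqualifying $2\varepsilon_1$ via (ii) while every other candidate fails (i). The principal obstacle is this combinatorial case analysis, in particular the exhaustive check that $\Psi_{\sigma^\sharp}$ really is empty in the nontrivial subcase so that no further roots enter $\Gamma_{\sigma^\sharp}$ and spoil the length-one resolution.
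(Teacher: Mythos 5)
Your proposal is correct and follows essentially the same route as the paper: a depth-based case analysis of conditions (i)--(iii) showing $\Delta_{\sigma^\sharp}=\{\pm 2\varepsilon_1\}$ exactly when $\mathrm{depth}(\sigma)=k-1$ and $\sigma_{k-1}\geq 2$, so that Theorem \ref{resolution} gives the length-one resolution and $\Ker\phi_\sigma\cong N(\sigma^\sharp-2\varepsilon_1)$, with $W_{\sigma^\sharp}$ trivial otherwise. You in fact supply more detail than the paper's proof (the explicit coordinates of $\sigma^\sharp+\rho$, the emptiness of $\Psi_{\sigma^\sharp}$, and the identification $N_\A(\sigma^\sharp)=N(\sigma^\sharp)$ from $n\geq k$), and all the computations check out.
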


\begin{proof}
    By case analysis depending on the depth of Young diagrams, we check conditions (i)-(iii) for non-compact positive roots and compute the group $W_\lambda$ and the root system $\Delta_\lambda$ associated to $\lambda = \sigma^\sharp$ for each $\sigma \in \Sigma_{n,k}$. It turns out that for all depths other than $k-1$, the group $W_\lambda$ is trivial and hence the resolution of $L(\lambda)$ from Theorem \ref{resolution} has length $r_\lambda = 0$. In these cases, $N(\lambda) \cong L(\lambda)$ is irreducible and $\Ker{\phi_\sigma}=0$. When the depth of $\sigma$ equals $k-1$, the group $W_\lambda$ depends on the value of $\sigma_{k-1}$. If $\sigma_{k-1}\geq 2$, then $W_\lambda$ has two elements and the associated root system is $\Delta_\lambda = \{\pm2 \varepsilon_1\}$. Therefore $r_\lambda =1$ and $\Ker{\phi_\sigma}$ is isomorphic to $Z_1^\lambda = N(\lambda - 2\varepsilon_1)$. For $\sigma_{k-1} < 2$, the group $W_\lambda$ is again trivial and $\Ker{\phi_\sigma}=0$.  
\end{proof}

\begin{prop} \label{oddthm}
Let $k \geq 3$ and $n=2k-3$. Then
\[ \textnormal{Ker } \phi \cong \bigoplus_{\sigma} E_\sigma \otimes  \textnormal{Ker } \phi_\sigma\]
    under the action of the Howe dual pair $(O(n),\mathfrak{sp}(2k))$ with $\sigma$ running over all Young diagrams $\sigma \in \Sigma_{n,k}$ of depths $k-2$ or $k-1$ satisfying $\sigma_{k-2} \geq 2$ and
  \[ \textnormal{Ker }\phi_\sigma \cong
 \begin{cases}
  N(\sigma^\sharp - 2\varepsilon_1 - 2\varepsilon_2), \quad  \textnormal{if depth}(\sigma) = k-2,   \\
   N(\sigma^\sharp - \varepsilon_1 - \varepsilon_2), \quad  \textnormal{if depth}(\sigma) = k-1.
 \end{cases} \]   
\end{prop}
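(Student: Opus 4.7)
The plan is to follow the same case-analysis strategy used for Proposition~\ref{eventhm}: for each $\sigma \in \Sigma_{n,k}$, with $\lambda = \sigma^\sharp$, I compute the set $\Gamma_\lambda$ of positive non-compact roots satisfying conditions (i)--(iii), determine $W_\lambda$ and $\Delta_\lambda$, and then apply Theorem~\ref{resolution}.

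Setting $n = 2k-3$, the weight $\lambda+\rho$ has coordinates $(\lambda+\rho)_i = \tfrac{5}{2} - i - \sigma_{k+1-i}$, which are all half-integers. The observation specific to the odd case is that for any long non-compact root $\alpha = 2\varepsilon_i$, the value $(\lambda+\rho)(\alpha^\vee) = (\lambda+\rho)_i$ is a half-integer and therefore never lies in $\N^+$. Hence condition~(i) automatically excludes all long roots, and only short roots $\varepsilon_i + \varepsilon_j$ ($i < j$) can enter $\Gamma_\lambda$; on such a root the value $5 - i - j - \sigma_{k+1-i} - \sigma_{k+1-j}$ is an integer whose positivity is easily controlled.

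I proceed by case analysis on $d = \textnormal{depth}(\sigma)$, using the fact that $\sigma \in \Sigma_{n,k}$ constrains the height of the second column of $\sigma$ to be at most $n - d$; in particular, for $d = k-1$ this forces $\sigma_{k-1} = 1$ automatically. For $d \leq k-3$ and for $d = k$, a direct inspection of coordinates shows that either no short root satisfies~(i), or else every candidate shares a coordinate with an element of $\Psi_\lambda$ (typically $\varepsilon_2 + \varepsilon_3$ or $\varepsilon_1 + \varepsilon_4$), violating~(ii); in either case $\Gamma_\lambda = \emptyset$, $W_\lambda$ is trivial and $\Ker \phi_\sigma = 0$. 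The same argument disposes of the boundary subcases $d \in \{k-2, k-1\}$ with $\sigma_{k-2} = 1$: the half-integer arithmetic places $\varepsilon_1 + \varepsilon_3$ in $\Psi_\lambda$, and this root is not orthogonal to the unique candidate $\varepsilon_1 + \varepsilon_2$.

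In the two productive cases $d = k-2$ with $\sigma_{k-2} \geq 2$ and $d = k-1$ (with the automatic $\sigma_{k-1} = 1$) with $\sigma_{k-2} \geq 2$, I verify that $\Psi_\lambda = \emptyset$ and that the only root in $\Delta_n^+$ satisfying~(i) is $\alpha = \varepsilon_1 + \varepsilon_2$, with $(\lambda+\rho)(\alpha^\vee)$ equal to $2$ and $1$ respectively. Consequently $W_\lambda = \{1, s_\alpha\}$, $\Delta_\lambda = \{\pm\alpha\}$ is of type $A_1$, and $r_\lambda = 1$, so Theorem~\ref{resolution} degenerates to a short exact sequence $0 \to Z_1^\lambda \to N(\lambda) \to L(\lambda) \to 0$, giving $\Ker \phi_\sigma \cong Z_1^\lambda$. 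Computing $s_\alpha(\lambda+\rho)$ yields $\lambda + \rho - 2\alpha$ and $\lambda + \rho - \alpha$ respectively, and a short check shows these are already $\kk$-dominant under $\sigma_{k-2} \geq 2$; subtracting $\rho$ gives $\sigma^\sharp - 2\varepsilon_1 - 2\varepsilon_2$ and $\sigma^\sharp - \varepsilon_1 - \varepsilon_2$, matching the statement. The main obstacle is the bookkeeping in the ruled-out cases, specifically pinpointing which element of $\Psi_\lambda$ blocks orthogonality in each depth; it is a finite but tedious enumeration that simply needs to be carried out to rule out contributions from all $\sigma$ not listed.
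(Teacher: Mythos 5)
Your proposal is correct and follows essentially the same route as the paper: a depth-by-depth check of conditions (i)--(iii) showing $r_{\sigma^\sharp}=1$ exactly when $\mathrm{depth}(\sigma)\in\{k-2,k-1\}$ with $\sigma_{k-2}\geq 2$ (so Theorem~\ref{resolution} collapses to a short exact sequence) and $r_{\sigma^\sharp}=0$ otherwise, followed by computing $Z_1^{\sigma^\sharp}=N((s_{\varepsilon_1+\varepsilon_2}(\sigma^\sharp+\rho))^+-\rho)$. Your half-integer observation eliminating the long roots, the identification of the blocking singular roots $\varepsilon_2+\varepsilon_3$ and $\varepsilon_1+\varepsilon_3$ in the excluded cases, and the dominance check in the two productive cases all verify correctly.
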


\begin{proof}
    The proof strategy is the same as in the even-dimensional situation. One only needs to show that $r_{\sigma^\sharp}=1$ when $\sigma$ is of depth $k-2$ or $k-1$ with $\sigma_{k-2}\geq 2$ and that $r_{\sigma^\sharp}=0$ otherwise. In the first case, having computed the root system $\Delta_{\sigma^\sharp}$, we use its only positive root $\gamma = \varepsilon_1 + \varepsilon_2$ to calculate the module $Z_1^{\sigma^\sharp} = N((s_\gamma(\sigma^\sharp+\rho))^+-\rho)$ isomorphic to $\Ker{\phi_\sigma}$.
\end{proof}

As a very crude consequence, the kernel of $\phi$ is non-zero in the boundary non-stable cases $n=2k-2$ and $n=2k-3$ with $n \geq 2$. Together with the following lemma, this fact yields the first main result of this paper. Since we have to consider changing numbers of variables, we introduce notation $\phi^{(n,k)}$ for the map 
\[\phi^{(n,k)}: \inv(M_{k\times n})\otimes \har(M_{k \times n}) \rightarrow \pol(M_{k \times n}), \quad I \otimes H \mapsto IH, \]
which carries out separation of $k$ variables of dimension $n$. The lemma, interesting in its own right, says that in a fixed dimension, the kernels form an ascending chain of sets as we increase the number of variables.

\begin{lemma} \label{embedding}
 Let $n \in \N^+$ be fixed and let $k\leq l$. Then there is an embedding $\textnormal{Ker }\phi^{(n,k)} \rightarrow \textnormal{Ker }\phi^{(n,l)} $.
 \end{lemma}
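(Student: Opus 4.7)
The plan is to realize separation of variables for $k$ variables as the restriction of separation of variables for $l$ variables to the subspace of polynomials that do not depend on the last $l-k$ rows of the matrix variable.

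First, I would identify $\pol(M_{k \times n})$ with the subalgebra of $\pol(M_{l \times n})$ consisting of polynomials in the variables $x_{ij}$ with $1 \leq i \leq k$, $1 \leq j \leq n$, i.e.\ polynomials that are constant in the last $l-k$ rows. Under this identification, I would check the two key compatibilities. For invariants: the generators $r_{ij}$, $1 \leq i \leq j \leq k$, of $\inv(M_{k \times n})$ are a subset of the generators of $\inv(M_{l \times n})$, so there is a natural inclusion $\inv(M_{k \times n}) \hookrightarrow \inv(M_{l \times n})$. For harmonics: if $H \in \pol(M_{k \times n})$, then $\Delta_{ij}(H) = 0$ for $1 \leq i \leq j \leq k$ interpreted in either setting, while for any pair $(i,j)$ with $\max(i,j) > k$ the Laplacian $\Delta_{ij}$ involves a derivative in a variable on which $H$ does not depend and therefore annihilates $H$. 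Consequently every element of $\har(M_{k \times n})$ remains harmonic when viewed in $\pol(M_{l \times n})$, producing an inclusion $\har(M_{k \times n}) \hookrightarrow \har(M_{l \times n})$.

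Next, since tensoring injective linear maps of $\C$-vector spaces yields an injective map (vector spaces are flat over a field), the above two inclusions combine into an embedding
\[ \iota : \inv(M_{k \times n}) \otimes \har(M_{k \times n}) \longrightarrow \inv(M_{l \times n}) \otimes \har(M_{l \times n}). \]
By construction $\iota$ sends a pure tensor $I \otimes H$ to the same tensor $I \otimes H$ regarded inside the larger tensor product, and multiplication of $I$ by $H$ is performed in the polynomial algebra $\pol(M_{l \times n})$ in the same way as in the subalgebra $\pol(M_{k \times n})$. Hence the diagram with $\phi^{(n,k)}$ on top, $\phi^{(n,l)}$ on the bottom, $\iota$ on the left, and the inclusion $\pol(M_{k \times n}) \hookrightarrow \pol(M_{l \times n})$ on the right commutes.

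Finally, a straightforward diagram chase gives the desired embedding: if $u \in \Ker \phi^{(n,k)}$, then $\phi^{(n,l)}(\iota(u))$ equals the image of $\phi^{(n,k)}(u) = 0$ under $\pol(M_{k \times n}) \hookrightarrow \pol(M_{l \times n})$, hence is zero, so $\iota$ restricts to an injective map $\Ker \phi^{(n,k)} \hookrightarrow \Ker \phi^{(n,l)}$. The only point requiring care is the assertion that harmonicity is preserved; everything else is formal. I do not expect a substantial obstacle, since the verification that mixed Laplacians $\Delta_{ij}$ with $\max(i,j) > k$ vanish on polynomials in the top $k$ rows is immediate from the explicit expression in \eqref{laplacians}.
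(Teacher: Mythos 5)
Your proof is correct and follows essentially the same route as the paper's: the three inclusions of $\pol$, $\har$ and $\inv$, combined via the tensor product of injective linear maps and a commutative square, then restricted to kernels. The only difference is in justifying $\inv(M_{k\times n})\subset\inv(M_{l\times n})$: the paper argues via the Second Fundamental Theorem, presenting the invariant rings as quotients $\mathcal{R}_k/\mathcal{J}_k$ and checking $\mathcal{J}_k=\mathcal{J}_l\cap\mathcal{R}_k$, whereas your appeal to the First Fundamental Theorem together with the concrete realization of both rings as subrings of $\pol(M_{l\times n})$ generated by nested sets of the $r_{ij}$ is a valid (and lighter) shortcut.
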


 \begin{proof}
The claim follows from the three inclusions $\pol(M_{k \times n}) \subset \pol(M_{l \times n})$, $\har(M_{k \times n}) \subset \har(M_{k \times n})$ and $\inv(M_{k \times n}) \subset \inv(M_{l \times n})$. If these hold, then the kernels are also in inclusion since the map $\phi^{(n,k)}$ becomes the restriction of $\phi^{(n,l)}$ to the subset $\inv(M_{k \times n}) \otimes \har(M_{k \times n})$.

The first inclusion is clear, while the second follows from the definition of Laplace operators $\Delta_{ij}$. Indeed, harmonics on $M_{k \times n}$ remain harmonic on $M_{l \times n}$ since all new Laplacians $\Delta_{ij}$ with $k < j \leq l$ involve differentiation with respect to a new variable $x_{jm}$ for some $m=1,\dots,n$ in each term.

The final inclusion follows from the SFT. The ring $\inv(M_{k \times n})$ is the quotient of the polynomial ring $\mathcal{R}_k = \C[r_{11},\dots,r_{kk}]$ in variables $r_{ij}$ with $1 \leq i \leq j \leq k$ by the ideal $\mathcal{J}_k$ generated by all $(n+1)\times (n+1)$ minors of the symmetric $k \times k$ matrix $(r_{ij})$. Given the obvious inclusion $\mathcal{R}_k \subset \mathcal{R}_l$, one only has to check that $\mathcal{J}_k = \mathcal{J}_l \cap \mathcal{R}_k$, which ensures injectivity of the ring map
\[\inv(M_{k\times n}) = \mathcal{R}_k/\mathcal{J}_k \rightarrow \mathcal{R}_l/\mathcal{J}_l = \inv(M_{l\times n}), \quad r + \mathcal{J}_k \mapsto r + \mathcal{J}_l.\]
The equality of ideals $\mathcal{J}_k = \mathcal{J}_l \cap \mathcal{R}_k$ is an elementary algebraic exercise.
\end{proof}

  \begin{thm} \label{mainthm}
     The map $\phi^{(n,k)}: \inv \otimes \har \rightarrow \pol(M_{k \times n})$ is injective if and only if $n \geq 2k-1$. 
 \end{thm}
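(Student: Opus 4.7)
The ``if'' direction is the known result on separation of variables in the semistable range $n \geq 2k - 1$ (Howe in the stable range $n \geq 2k$, Lavicka at the boundary $n = 2k - 1$), which I would simply quote from the references. The content of the theorem is the converse: assuming $n < 2k - 1$, equivalently $n \leq 2k - 2$, I want to produce a non-zero element of $\Ker \phi^{(n, k)}$. The uniform strategy is to locate a boundary pair $(n, k_0)$ with $k_0 \leq k$ for which one of Propositions \ref{eventhm} or \ref{oddthm} gives an explicit non-zero isotypic summand of $\Ker \phi^{(n, k_0)}$, and then transport the conclusion from $(n, k_0)$ to $(n, k)$ via Lemma \ref{embedding}.

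Which proposition is invoked depends on the parity of $n$. If $n \geq 2$ is even, set $k_0 = n/2 + 1$; then $2 \leq k_0 \leq k$ and $n = 2k_0 - 2$, and the partition $\sigma = (2, \ldots, 2) \in \Sigma_{n, k_0}$ with $k_0 - 1$ equal parts has depth $k_0 - 1$ and satisfies $\sigma_{k_0 - 1} = 2$, so Proposition \ref{eventhm} delivers a non-zero summand $E_\sigma \otimes N(\sigma^\sharp - 2\varepsilon_1)$ inside $\Ker \phi^{(n, k_0)}$. If $n \geq 3$ is odd, set $k_0 = (n + 3)/2 \geq 3$; now $n = 2k_0 - 3$, and the analogous partition $(2, \ldots, 2)$ of depth $k_0 - 2$ activates Proposition \ref{oddthm}. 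In either case, Lemma \ref{embedding} embeds $\Ker \phi^{(n, k_0)}$ into $\Ker \phi^{(n, k)}$, settling those parities.

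The principal subtlety, and the main obstacle, is the edge case $n = 1$: the hypothesis forces $k \geq 2$, but Proposition \ref{oddthm} applies only for $k \geq 3$, so the boundary pair $(1, 2)$ is not covered by either proposition. I would bypass this by exhibiting a kernel element directly. For $n = 1$ and $k = 2$, writing $x_i := x_{i1}$, the linear polynomials $x_1, x_2$ are harmonic because each $\Delta_{ij} = \partial_{x_i}\partial_{x_j}$ is second order, while $r_{11} = x_1^2$ and $r_{12} = x_1 x_2$ are linearly independent invariants. Hence
\[ r_{11} \otimes x_2 - r_{12} \otimes x_1 \in \inv \otimes \har \]
is non-zero as a tensor, and its image $x_1^2 x_2 - x_1 x_2 \cdot x_1 = 0$ shows that it lies in $\Ker \phi^{(1, 2)}$. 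A final application of Lemma \ref{embedding} then lifts this to $\Ker \phi^{(1, k)} \neq 0$ for every $k \geq 2$, closing the proof.
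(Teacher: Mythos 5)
Your proposal is correct and follows essentially the same route as the paper: quote \cite{Lav} for the semistable range, use Propositions \ref{eventhm} and \ref{oddthm} at the boundary cases $n=2k_0-2$, $n=2k_0-3$, transport via Lemma \ref{embedding}, and treat $n=1$ by the explicit relation $r_{11}\otimes x_2 - r_{12}\otimes x_1$, which is exactly the paper's element $x^2\otimes y - xy\otimes x$. The only (welcome) refinement is that you exhibit the concrete narrow diagram $(2,\dots,2)$ witnessing that the index sets in those propositions are non-empty, a point the paper leaves implicit.
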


 \begin{proof}
 If $n=1$, then the theorem holds. Indeed, if $k=1$, then $\C[x] \cong \C[x^2] \otimes (\C \oplus x\C) = \inv \otimes  \har$, where $x$ is a scalar variable. If $k \geq 2$, then $x^2\otimes y - xy  \otimes x$ is a non-zero element of Ker $\phi$, where $x,y$ are any two of the $k$ scalar variables. 
 
     Fix $n\geq 2$ and consider $k$ such that $n=2k-2$ if $n$ is even and $n=2k-3$ if $n$ is odd. Then $k$ is the least number of variables such that $n<2k-1$ and for all $l \geq k$, it holds that $\Ker{\phi^{(n,l)}} \neq 0$ by Lemma \ref{embedding}. Hence if $n < 2l-1$, the map $\phi$ is not injective. 
     
     For the converse, see \cite{Lav}. In fact, the proof can be quickly replicated with the terminology at hand. It is sufficient to check that all non-compact positive roots fail to satisfy condition (i) when $n\geq 2k-1$. Then the generalized Verma module $N(\lambda)$ is irreducible for each relevant $\lambda$ and so $\Ker{\phi}=0$.
 \end{proof}

In Section \ref{hwvectors}, we give a constructive proof of this result by producing formulas for non-zero elements of $\Ker{\phi^{(n,k)}}$ whenever $n < 2k-1$.

\section{Generators of the kernel} \label{gen}
We present the second main result of the paper, a general construction of highest weight vectors in the kernel of the multiplication map $\phi^{(n,k)}$ in the full non-stable range $n<2k-1$.

\subsection{Highest weight harmonics} \label{harmonics}

Before describing generators of $\Ker{\phi}$, we need to understand the structure of harmonic polynomials in each dimension. Both \cite{G} and \cite{Ho2} give explicit constructions for highest weight vectors in $\har$, which we now briefly recall and adjust to fit our setup. Fix $n,k\in \N^+$ and let $m=\min(n,k)$. For $j=1,\dots,m$, let $\delta_j$ denote the determinant of the $j\times j$ submatrix forming the bottom right corner of the variable matrix $x=(x_{il})_{k \times n}$, i.e. 
\[ \delta_j = \det{\begin{pmatrix}
x_{k-j+1,n-j+1} & x_{k-j+1,n-j+2} & \dots & x_{k-j+1,n} \\
x_{k-j+2,n-j+1} & x_{k-j+2,n-j+2} & \dots & x_{k-j+2,n} \\
\vdots & \vdots & \vdots & \vdots \\
x_{k,n-j+1} & x_{k,n-j+2} & \dots & x_{k,n}
\end{pmatrix}}.\]
Take $\sigma \in \Sigma_{n,k}$ and let $a_j = \sigma_j - \sigma_{j+1}$ for $j = 1,\dots,m$, where $\sigma_{m+1} := 0$. Then 
\begin{equation} \label{hwharmonic}
    \delta_\sigma = \delta_1^{a_1} \delta_2^{a_2} \dots \delta_m^{a_m} 
\end{equation}
forms a joint $(SO(n),\mathfrak{gl}(k))$ highest weight vector in $\har$ of weight $\sigma^\sharp$ with respect to $\mathfrak{gl}(k)$. Note that the action on the determinant $\delta_j$ by an operator $h_{is}$ with $i\neq s$ exchanges the occurrence of the variable $x_{sl}$ with the occurrence of $x_{il}$ for any $l = n-j+1,\dots,n$.

\subsection{Calculation of highest weights in $\Ker{\phi}$}
The method sketched out in Section \ref{generators} tells us that we should begin with finding the highest weight $\lambda^\prime$ of $Z_1^\lambda = N(\lambda^\prime)$ for each $\sigma \in \Sigma_{n,k}$ and $\lambda = \sigma^\sharp$. We will show that $\lambda^\prime$ is the highest weight of $\Ker{\phi_\sigma}$ unless $\sigma  = 0$ is trivial. Quite surprisingly, it turns out that the task of calculating $\lambda^\prime$ can be radically simplified in two steps. Firstly, we show precisely how the computation depends on the number of variables $k$ in a fixed dimension $n$, which will later allow us to restrict our attention to the case $k=n+1$. Secondly, we explain why it is sufficient to consider only \textit{`narrow'} Young diagrams $\sigma \in  \Sigma_n$ with at most two columns instead of considering all $\sigma \in \Sigma_n$. Let us begin the process by introducing some new notation. Before doing so, recall that $\Sigma_{n,k}$ denotes the set of Young diagrams of depth at most $k$ with at most $n$ boxes in the first two columns. 

Since we have to consider multiple Young diagrams $\sigma \in \Sigma_{n,k}$ as well as different values of $k$ at a time, we denote by $\lambda_{(k)}^\prime(\sigma)$ the weight 
\[ \lambda_{(k)}^\prime(\sigma) = (s_\gamma(\sigma^\sharp + \rho))^+ - \rho\] calculated using the root $\gamma = \min{\Gamma^k_{\sigma^\sharp}}$ from Proposition \ref{propgen}. Here, $\Gamma^k_\lambda$ denotes the set $\Gamma_\lambda$ of positive non-compact roots in $\mathfrak{sp}(2k)$ satisfying conditions (i)-(iii). If $k$ is irrelevant or clear from the context, we simply write $\lambda^\prime(\sigma) = \lambda_{(k)}^\prime(\sigma)$ and $\Gamma_\lambda = \Gamma^k_\lambda$. It may happen that $\lambda^\prime(\sigma)$ is undefined, though, if the set $\Gamma_{\sigma^\sharp}$ is empty. Therefore we only consider $\lambda_{(k)}^\prime(\sigma)$ for $\sigma \in \Sigma_{n,k}^0$, where $\Sigma_{n,k}^0$ denotes
\[ \Sigma_{n,k}^0 = \{\sigma \in \Sigma_{n,k}: \Gamma^k_{\sigma^\sharp} \neq \emptyset \} . \]
Note that by Theorem \ref{resolution}, $\Sigma^0_{n,k}$ is exactly the set of $\sigma \in \Sigma_{n,k}$ for which the generalized Verma module $N(\sigma^\sharp)$ is reducible, i.e. $N(\sigma^\sharp) \ncong L(\sigma^\sharp)$. By the end of this section, we will be able to give an explicit description of $\Sigma_{n,k}^0$ based on the computations of $\lambda^\prime$ below. The following lemma says that the operation $\sigma \mapsto \lambda_{(k)}^\prime(\sigma)$ is independent of $k$ in a natural sense. 

\begin{lemma} \label{L1}
    If $\sigma \in \Sigma_{n,k}^0$, then $\sigma \in \Sigma_{n,k+1}^0$ and
    \[ \lambda^\prime_{(k+1)}(\sigma) = (-\frac{n}{2},\lambda^\prime_{(k)}(\sigma)).\]
    Conversely, if $\sigma \in \Sigma_{n,k+1}^0 \cap \Sigma_{n,k}$ and $\lambda^\prime_{(k+1)}(\sigma) = (-\frac{n}{2},\nu)$ for some $k$-tuple $\nu$, then $\sigma \in \Sigma_{n,k}^0$ and $\lambda^\prime_{(k)}(\sigma)=\nu$.
\end{lemma}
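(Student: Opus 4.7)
The plan is to exploit the natural embedding $\iota\colon \h^*_{(k)} \hookrightarrow \h^*_{(k+1)}$ given by $\varepsilon_i \mapsto \varepsilon_{i+1}$, which identifies the root system of $\mathfrak{sp}(2k)$ with the subsystem of $\mathfrak{sp}(2(k+1))$ consisting of roots involving only $\varepsilon_2, \ldots, \varepsilon_{k+1}$; call these \emph{old} roots, and those involving $\varepsilon_1$ \emph{new}. The key observation is that viewing $\sigma \in \Sigma_{n,k}$ inside $\Sigma_{n,k+1}$ by appending $\sigma_{k+1} = 0$ gives $\mu + \rho_{(k+1)} = (k+1-\tfrac{n}{2},\, \lambda + \rho_{(k)})$ as tuples, where $\lambda = \sigma^\sharp_{(k)}$ and $\mu = \sigma^\sharp_{(k+1)}$. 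Hence pairings of old roots with $\mu + \rho_{(k+1)}$ agree with the corresponding pairings with $\lambda + \rho_{(k)}$.

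For the forward direction I would first compare singularity sets. The old-root pairings agree, so $\iota(\Psi_\lambda) \subseteq \Psi_\mu$, and the only possible new singularities are $\varepsilon_1 - \varepsilon_l$ (ruled out by direct computation using $\sigma_i \geq 0$), $2\varepsilon_1$ (which would demand $n = 2(k+1)$, impossible since $\sigma \in \Sigma_{n,k}^0$ forces $n < 2k-1$ by Theorem~\ref{mainthm}), and short roots $\varepsilon_1 + \varepsilon_l$. In particular, $\Psi_\mu$ contains a long root exactly when $\Psi_\lambda$ does. Given $\gamma_0 \in \Gamma^k_\lambda$, conditions (i) and (iii) for $\iota(\gamma_0) \in \Gamma^{k+1}_\mu$ follow at once; for (ii) I would show that any index $l-1$ of a new singularity $\varepsilon_1 + \varepsilon_l$ cannot coincide with an index of $\gamma_0$. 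Indeed, such coincidence would force $(\lambda + \rho_{(k)})_{l-1} = \tfrac{n}{2} - k - 1$, whereas condition (i) for $\gamma_0$, the strict decrease of $\lambda + \rho_{(k)}$, and the bound $(\lambda + \rho_{(k)})_1 \leq k - \tfrac{n}{2}$ (a consequence of $\sigma_k \geq 0$) make $(\lambda + \rho_{(k)})_i$ strictly greater than $\tfrac{n}{2} - k - 1$ for every index $i$ of $\gamma_0$. The same estimates show that $\iota$ embeds $\Gamma^k_\lambda$ into $\Gamma^{k+1}_\mu$ preserving the partial order, while any new root $\beta \in \Gamma^{k+1}_\mu$ yields $\iota(\gamma_0) - \beta$ with negative $\varepsilon_1$-coefficient, which precludes $\beta < \iota(\gamma_0)$. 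By uniqueness of the minimum (Proposition~\ref{propgen}), $\iota(\gamma_0) = \min \Gamma^{k+1}_\mu$.

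The weight formula then drops out: $s_{\iota(\gamma_0)}$ fixes the first coordinate of $\mu + \rho_{(k+1)}$ and acts as $s_{\gamma_0}$ on the rest, while the above estimates force all reflected coordinates at positions $\geq 2$ to remain strictly below $k+1 - \tfrac{n}{2}$. Thus the dominance permutation $(\cdot)^+$ leaves position $1$ in place, and subtracting $\rho_{(k+1)}$ yields precisely $(-\tfrac{n}{2},\, \lambda'_{(k)}(\sigma))$. For the converse, if $\gamma' = \min \Gamma^{k+1}_\mu$ were a new root, a short direct computation using condition (i) for $\gamma'$ would show the maximum coordinate of $s_{\gamma'}(\mu + \rho_{(k+1)})$ to drop to at most $k - \tfrac{n}{2}$, forcing the first coordinate of $\lambda'_{(k+1)}(\sigma)$ to be strictly less than $-\tfrac{n}{2}$, contrary to the hypothesis. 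Hence $\gamma' = \iota(\gamma_0)$ for some old root $\gamma_0$; tracing conditions (i)--(iii) back through $\iota$ yields $\gamma_0 \in \Gamma^k_\lambda$, so $\sigma \in \Sigma_{n,k}^0$, and $\nu = \lambda'_{(k)}(\sigma)$ follows from the forward direction. The hard part will be the careful bookkeeping of bounds in the forward direction---both for condition (ii) and for the preservation of the first coordinate under $(\cdot)^+$---but everything ultimately reduces to the single estimate $(\lambda + \rho_{(k)})_1 \leq k - \tfrac{n}{2}$.
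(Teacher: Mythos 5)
Your proposal is correct, but it proves the lemma by a genuinely different route than the paper. The paper's proof is essentially a black-box citation of the case-by-case computation of $\Gamma_{\sigma^\sharp}$ in Cheng--Zhang \cite[Section 7]{CZ}: there $\Gamma^m = \{\varepsilon_{m+1-i}+\varepsilon_{m+1-j} : i,j \in I_m\}$ with $I_m = \{1,\dots,m\}\setminus J$ and $J$ depending only on $n$ and $\sigma$, so $I_{k+1}$ is either $I_k$ or $I_k \cup \{k+1\}$ and the stability of $\min\Gamma^m$ under $m \mapsto m+1$ is immediate; the converse is handled by the same bound on the first entry of $\lambda'_{(k+1)}(\sigma)$ that you use. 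You instead verify conditions (i)--(iii) from first principles through the shift embedding $\iota$, comparing $\Psi_\lambda$ with $\Psi_\mu$ directly. Your key estimates check out: $\mu+\rho_{(k+1)} = (k+1-\tfrac{n}{2},\lambda+\rho_{(k)})$ with strictly decreasing entries bounded above by $k-\tfrac{n}{2}$ in positions $\geq 2$, which rules out new singularities meeting an index of $\gamma_0$ (via condition (i)), keeps the first coordinate on top after reflecting and sorting, and forces the first entry of $\lambda'_{(k+1)}(\sigma)$ strictly below $-\tfrac{n}{2}$ whenever the minimal root is new. What your approach buys is independence from the external case analysis in \cite{CZ}, at the cost of the bookkeeping you acknowledge. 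One point to make explicit when writing it up: in the forward direction, to conclude $\min\Gamma^{k+1}_\mu = \iota(\gamma_0)$ you need not only that no \emph{new} root lies below $\iota(\gamma_0)$ but also that every \emph{old} root of $\Gamma^{k+1}_\mu$ lies in $\iota(\Gamma^k_\lambda)$; this follows from the same two-way transfer of conditions (i)--(iii) that you invoke in the converse (old-root pairings agree, $\Psi_\mu$ meets the old roots exactly in $\iota(\Psi_\lambda)$, and $\Psi_\mu$ contains a long root iff $\Psi_\lambda$ does), but it should be stated.
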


\begin{proof}
    The lemmas of this section are technical results which require checking conditions (i)-(iii) in quite a general setting. The procedure is carried out explicitly in \cite[Section 7]{CZ}, where one can find a case-by-case description of the set $\Gamma_{\sigma^\sharp}$ of positive non-compact roots satisfying (i)-(iii).
    
    Let us fix $\sigma \in \Sigma_{n,k}$ and write $\Gamma^m = \Gamma^m_{\sigma^\sharp}$ for $m=k,k+1$. To prove the first claim, it is sufficient to show that if $\varepsilon_i + \varepsilon_j$ is the lowest root in $\Gamma^k$, then $\varepsilon_{i+1}+\varepsilon_{j+1}$ is the lowest root in $\Gamma^{k+1}$. Indeed, calculations in \cite{CZ} say that there is an index set $I_m \subset \{1,\dots,m\}$ such that 
    \[\Gamma^m = \{ \varepsilon_{m+1-i}+\varepsilon_{m+1-j} : i,j \in I_m \}.  \]
    In most cases, there is the additional requirement for $i,j \in I_m$ to be distinct, but this is not important for us here. What matters is that the set $I_m$ is described as $I_m = \{1,\dots,m\} \setminus J$, where the set $J$ depends only on $n$ and $\sigma$ and not on $m$. Therefore $I_{k+1} = I_k$ or $I_{k+1}= I_k \cup \{k+1\}$, which proves the claim concerning lowest roots of $\Gamma^k$ and $\Gamma^{k+1}$.

    For the converse, it suffices to show that $\Gamma^{k+1} \neq \emptyset$ implies $\Gamma^k \neq \emptyset$ under the assumption that the first entry of $\lambda^\prime_{(k+1)}(\sigma)$ is equal to $-n/2$. The rest follows from what we have proved above. If $I_{k+1}= I_k$, then the claim clearly holds, so let us assume that $I_{k+1}=I_{k} \cup \{k+1\}$ and suppose that $\Gamma^k = \emptyset$ for contradiction. Then the lowest root in $\Gamma^{k+1}$ is of the form $\gamma = \varepsilon_1 + \varepsilon_j$ for some $j\geq 1$. Using the fact that the entries of $\sigma^\sharp + \rho$ form a strictly decreasing sequence, we deduce that the first entry of $\lambda^\prime_{(k+1)}(\sigma) = (s_\gamma(\sigma^\sharp + \rho))^+ - \rho$ must be bounded by $-n/2-\sigma_{k-1} - 1$ from above, a contradiction.
\end{proof}

Let us denote by $\Omega_{n,k}\subset \Sigma_{n,k}$ the Young diagrams of $\Sigma_{n,k}$ with at most two columns. An element 
\[\sigma = (\overbrace{\underbrace{2,\dots,2}_t,1,\dots,1}^d) \in \Omega_{n,k}\] is uniquely determined by the non-negative integers $t$ and $d=$ depth$(\sigma)$, which satisfy $t + d \leq n$ and  $t\leq d \leq k$. We call such $\sigma$ the \textit{narrow Young diagram of type} $(t,d)$. Also define $\Omega^0_{n,k} = \Omega_{n,k} \cap \Sigma^0_{n,k}$, the set of narrow Young diagrams $\sigma$ for which $\lambda_{(k)}^\prime(\sigma)$ exists. For example, Proposition \ref{eventhm} describing the even boundary non-stable case $n=2k-2$ says that the set $\Omega^0_{2k-2,k}$ only contains the narrow Young diagram of type $(k-1,k-1)$. There is a nice formula for $\lambda^\prime(\sigma)$ when $\sigma$ is narrow.

\begin{lemma} \label{L2}
    For any non-trivial narrow Young diagram
    \[ \sigma = (\underbrace{2,\dots,2}_t,\underbrace{1,\dots,1}_b) \in \Omega_{n,n+1}, \]it holds that $\sigma \in \Omega_{n,n+1}^0$ and 
    \[\lambda_{(n+1)}^\prime(\sigma) = (\underbrace{-\frac{n}{2},\dots,-\frac{n}{2}}_t,\underbrace{-\frac{n}{2}-1,\dots,-\frac{n}{2}-1}_b, -\frac{n}{2}-2,\dots,-\frac{n}{2}).\]
\end{lemma}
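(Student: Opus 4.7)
The plan is to apply Proposition~\ref{propgen} mechanically to $\lambda = \sigma^\sharp$. First I would write down $\sigma^\sharp + \rho$ explicitly. Using $\sigma^\sharp_j = -\sigma_{n+2-j} - n/2$ and $\rho = (n+1, n, \dots, 1)$, the entries $(\sigma^\sharp + \rho)_j$ split into three consecutive segments: they equal $n/2 + 2 - j$ on $[1, n+1-d]$, then $n/2 + 1 - j$ on $[n+2-d, n+1-t]$ (empty when $b = 0$), and finally $n/2 - j$ on $[n+2-t, n+1]$ (empty when $t = 0$), where $d = t+b$. In particular the sequence is strictly decreasing, a fact used repeatedly below.

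Next I would determine the singularity set $\Psi_{\sigma^\sharp}$. Since the entries of $\sigma^\sharp + \rho$ are strictly decreasing, no compact root $\varepsilon_i - \varepsilon_j$ lies in $\Psi_{\sigma^\sharp}$, so only non-compact roots can be singular. The vanishing condition $(\sigma^\sharp + \rho)_i + (\sigma^\sharp + \rho)_j = 0$ translates into an equation $i+j = n + c_i + c_j$ with $c_\ell \in \{0,1,2\}$ determined by the segment containing $\ell$, so $\Psi_{\sigma^\sharp}$ can be read off as a short explicit list from the three-segment form; long roots $2\varepsilon_i$ enter only when $n$ is even.

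Then I would go through conditions (i)--(iii) to pin down $\Gamma_{\sigma^\sharp}$ and its minimum $\gamma$ in the partial order on $\Delta^+$. This is the technical core and the main obstacle. The case analysis splits on whether $t = 0$, $b = 0$ or both are positive, and on the parity of $n$: for $n$ odd, condition (i) forces $\gamma$ to be a short root $\varepsilon_i + \varepsilon_j$; for $n$ even, a long root $2\varepsilon_i$ can appear and condition (iii) may intervene whenever $\Psi_{\sigma^\sharp}$ contains a long root (which happens only in certain even cases). In each subcase I would exhibit a concrete $\gamma$ satisfying (i)--(iii), thereby also proving $\sigma \in \Omega^0_{n,n+1}$, and verify its minimality by checking that every root sitting strictly below it in the partial order violates (i) or (ii); the pattern one observes in small examples is that $\gamma$ always pairs indices straddling the boundaries between the three segments, but the precise form depends on the case.

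Finally, with $\gamma$ in hand, $s_\gamma$ acts on $\sigma^\sharp + \rho$ by swapping two entries with sign (short $\gamma$) or negating one entry (long $\gamma$). Taking the $\kk$-dominant rearrangement and subtracting $\rho$ entrywise is then a routine arithmetic repackaging: the modified entries combine with the unchanged ones to produce exactly the blocks of $-n/2$, $-n/2 - 1$, and $-n/2 - 2$ of the stated lengths. This last step is just careful bookkeeping and carries no conceptual difficulty once the correct $\gamma$ has been identified.
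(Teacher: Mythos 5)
Your setup is right (the three-segment form of $\sigma^\sharp+\rho$, the strict decrease, the parity observation that long roots are excluded from $\Gamma_{\sigma^\sharp}$ by condition (i) when $n$ is odd), and the overall strategy — apply Proposition~\ref{propgen} to $\lambda=\sigma^\sharp$ — is the correct one. But the proposal has a genuine gap exactly where the content of the lemma lies: you never actually exhibit the minimal root $\gamma=\min\Gamma_{\sigma^\sharp}$. The sentence ``in each subcase I would exhibit a concrete $\gamma$ \dots but the precise form depends on the case'' defers the entire computation. Without an explicit $\gamma$ one can verify neither membership in $\Omega^0_{n,n+1}$, nor minimality (for which condition (iii) must also be checked, not only (i) and (ii)), nor the final ``bookkeeping'' step; indeed the identity of $\gamma$ really does vary in a nontrivial way with $(t,b)$ and the parity of $n$ (for $n=2$, $k=3$ one finds $\gamma=2\varepsilon_1$ for $\sigma=(1)$ but $\gamma=\varepsilon_1+\varepsilon_3$ for $\sigma=(1,1)$, the latter forced by condition (iii) because $\Psi_{\sigma^\sharp}$ contains the long root $2\varepsilon_2$). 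A complete argument must pin $\gamma$ down in closed form — as a root pairing indices determined by $t$ and $d=t+b$ — and check (i)--(iii) and minimality against that formula; that is the case analysis you describe but do not carry out.

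For comparison, the paper does not perform this case analysis from scratch either: for $t,b>0$ it quotes the concluding example of Enright--Willenbring \cite[p.~374]{EW}, where precisely this computation of $\Gamma_\lambda$ and of $(s_\gamma(\lambda+\rho))^+-\rho$ is done, and combines it with Lemma~\ref{L1} to pass to $k=n+1$; only the boundary cases $t=0$ or $b=0$ are checked directly against conditions (i)--(iii). Your route would be more self-contained if completed, but as written it is an outline of a proof rather than a proof. Either import the explicit description of $\Gamma_{\sigma^\sharp}$ from \cite{EW} or \cite{CZ}, or finish the case analysis by writing down $\gamma$ explicitly in each of the cases you enumerate.
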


\begin{proof}
    Except for certain boundary cases, this is the content of the concluding example in \cite[p.~374]{EW} combined with Lemma \ref{L1}. The boundary cases are those in which $t=0$ or $b=0$ and checking conditions (i)-(iii) to compute the set $\Gamma_{\sigma^\sharp}$ and thus the weight $\lambda^\prime(\sigma)$ is fairly easy.
\end{proof}

The final technical lemma enables the reduction from all Young diagrams to the narrow ones. Thanks to Lemma \ref{L1}, it is sufficient to consider the case $k=n$.

\begin{lemma} \label{L3}
    Let $\sigma \in \Omega_{n,n}$ be of type $(t,d)$ and let $\mu = (\mu_1,\dots,\mu_t)$ be a partition of depth at most $t$. If $t>0$, then $\sigma \in \Omega_{n,n}^0$ and $\sigma + \mu \in \Sigma_{n,n}^0$. In this case,   \[ \lambda_{(n)}^\prime(\sigma + \mu) = \lambda_{(n)}^\prime(\sigma) + (0,\dots,0,-\mu_t,-\mu_{t-1},\dots,-\mu_1).\]
\end{lemma}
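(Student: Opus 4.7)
The plan is to show that the minimum root $\gamma$ of $\Gamma^n_{\sigma^\sharp}$ is stable under the perturbation $\sigma \mapsto \sigma + \mu$, i.e., it remains the minimum of $\Gamma^n_{(\sigma+\mu)^\sharp}$. The crucial structural fact is that $(\sigma + \mu)^\sharp$ agrees with $\sigma^\sharp$ in the first $n-t$ coordinates and differs in the last $t$ coordinates by $-\mu_t, -\mu_{t-1}, \ldots, -\mu_1$; this follows directly from $\mu$ having depth at most $t$ together with the definition $\sigma^\sharp_j = -\sigma_{n+1-j} - n/2$.

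Step one: verify $\sigma \in \Omega_{n,n}^0$ and identify $\gamma$. Combining Lemmas \ref{L1} and \ref{L2} with the case analysis of $\Gamma_\lambda$ carried out in \cite{CZ}, the minimum $\gamma$ of $\Gamma^n_{\sigma^\sharp}$ is identified as a specific non-compact positive root whose indices all lie in $\{1, \ldots, n-t\}$. The support property reflects the fact that the consecutive differences of the strictly decreasing sequence $(\sigma^\sharp + \rho)_j$ deviate from $-1$ only at positions $j = n-d$ and $j = n-t$, so the index pairs producing the favorable sums needed for condition (i) lie entirely within the first $n-t$ positions.

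Step two: verify that $\gamma$ satisfies conditions (i)-(iii) relative to $(\sigma+\mu)^\sharp$ (giving $\sigma + \mu \in \Sigma_{n,n}^0$) and that it remains minimal in $\Gamma^n_{(\sigma+\mu)^\sharp}$. Condition (i) persists because $\gamma$ involves only coordinates where the weight is unchanged. For (ii) and (iii), $\Psi_{(\sigma+\mu)^\sharp}$ may differ from $\Psi_{\sigma^\sharp}$, but any new singularity necessarily involves at least one of the last $t$ coordinates, since sums, differences, and half-values among the first $n-t$ coordinates are unchanged. The support of $\gamma$ in $\{1, \ldots, n-t\}$ then forces orthogonality to every new singularity that does not share an index with $\gamma$, and the explicit indices of $\gamma$ from \cite{CZ} preclude such sharing. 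For minimality, note that the perturbation can only decrease the sums controlling (i), so no new roots enter (i); any roots newly meeting (ii) (due to old singularities leaving $\Psi$) are supported in the first $n-t$ coordinates with indices strictly smaller than those of $\gamma$, making them strictly larger than $\gamma$ in the standard partial order on $\Delta^+$.

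Step three: compute $\lambda_{(n)}^\prime(\sigma + \mu) = (s_\gamma((\sigma+\mu)^\sharp + \rho))^+ - \rho$. Since $\gamma$ is supported in the first $n-t$ coordinates, $s_\gamma$ fixes the last $t$ coordinates. The unperturbed computation (Lemma \ref{L2}) implies that the last $t$ entries of $s_\gamma(\sigma^\sharp + \rho)$ remain strictly smaller than its first $n-t$ entries, so the Weyl-for-$\kk$ sorting operation $(\cdot)^+$ does not mix the two blocks. Decreasing the last $t$ entries by $\mu$ only reinforces this domination, which yields the claimed formula $\lambda_{(n)}^\prime(\sigma+\mu) = \lambda_{(n)}^\prime(\sigma) + (0, \ldots, 0, -\mu_t, -\mu_{t-1}, \ldots, -\mu_1)$. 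The main obstacle throughout is the case analysis for condition (ii) under perturbation, which hinges crucially on the precise position of $\gamma$ obtained from \cite{CZ}.
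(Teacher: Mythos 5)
Your proof follows essentially the same route as the paper's: reduce to showing that $\min\Gamma_{\sigma^\sharp} = \min\Gamma_{(\sigma+\mu)^\sharp}$ and that the reflection-and-sort operation $(s_\gamma(\cdot))^+$ leaves the last $t$ coordinates untouched, with the heavy lifting deferred to the explicit case-by-case description of $\Gamma_\lambda$ in \cite{CZ}. The only difference is cosmetic: the paper compares the index sets $I$ and $J$ supplied by \cite{CZ} for the two weights directly, whereas you track how conditions (i)--(iii) behave under the perturbation; both arguments are correct at the same level of (sketched) detail.
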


\begin{proof}
 The claim that $t>0$ implies $\sigma \in \Omega^0_{n,n}$ is a direct consequence of Lemmas \ref{L1} and \ref{L2}. As in Lemma \ref{L1}, the proof relies on the explicit description of $\Gamma_\lambda$ provided by \cite{CZ}. The proof can be divided into two steps. In the first step, we need to show that 
    \begin{equation} \label{minima}
        \min{\Gamma_{\sigma^\sharp}} = \min{\Gamma_{(\sigma+\mu)^\sharp}},
    \end{equation} which immediately proves that $\sigma + \mu \in \Sigma_{n,n}^0$. 
    In the second step, we show that the operation $(s_\gamma(\sigma^\sharp+\rho))^+$ only affects the first $n+1-t$ indices of $\sigma^\sharp + \rho$, where  $\gamma$ is the minimum from (\ref{minima}). The proof of this claim is a straightforward case-by-case analysis, so we skip it and prove only equality (\ref{minima}).
    
    Let us denote by $I$ the index set of $\Gamma_{\sigma^\sharp}$ and by $J$ the index set of $\Gamma_{(\sigma+\mu)^\sharp}$, i.e. $\Gamma_{\sigma^\sharp} = \{ \varepsilon_{n+1-i} + \varepsilon_{n+1-j} : i,j \in I\}$ and similarly for $\Gamma_{(\sigma+\mu)^\sharp}$ and $J$. In most cases, we also require $i,j$ to be distinct. Either way, to prove equality (\ref{minima}), it is sufficient to show that $I\subset J$ and $j > \max{I}$ for all $j\in J\setminus I$. This is done by case analysis depending on the parity of $n$ and the depth $d$ of $\sigma$. For example, if $n=2p$ is even and $d<p$, then the calculations in \cite{CZ} say that $I = \{n-d,n-t+1\}$ and $I \subset J \subset \{ n-d,n-t+1,n-t+2,\dots,n\}$, so the claim holds. The remaining cases are very similar. 
\end{proof}

Note that any $\sigma \in \Sigma_{n,k} \setminus \Omega_{n,k}$ can be uniquely decomposed as $\sigma_0 + \mu$ with a `maximal' $\sigma_0 \in \Omega_{n,k}$ and a suitable partition $\mu$, so Lemma \ref{L3} is without loss of generality. Indeed, let $t=\max{\{i:\sigma_i \geq 2\}}$ and $d=$ depth$(\sigma)$. Then we can take the narrow Young diagram $\sigma_0 \in \Omega_{n,k}$ of type $(t,d)$ and put $\mu_i = \sigma_i - 2$ for $i=1,\dots,t$. Moreover, if $\sigma \in \Sigma_{n,k}$ has such decomposition $\sigma = \sigma_0 + \mu$ and $\sigma \in \Sigma^0_{n,k}$, then $\sigma_0 \in \Omega_{n,k}^0$. This follows for $k=n$ from Lemma \ref{L3} and then for general $k$ from Lemma \ref{L1}.

A consequence of Lemmas \ref{L1}-\ref{L3} is an explicit description of $\Sigma_{n,k}^0$ advertised above. Of course, the set $\Sigma_{n,k}^0$ is always empty in the semistable range $n\geq 2k-1$.

\begin{prop}
Assume that $n,k \in \N^+$ satisfy $n <2k-1$. 
\begin{enumerate}
    \item If $k>n$, then $\Sigma_{n,k}^0 = \Sigma_{n,k} = \Sigma_n$.
    \item If $k \leq n$, then 
    \[\Sigma_{n,k}^0 = \{\sigma \in \Sigma_{n,k} : \sigma_{n+1-k} \geq 2 \}. \]
\end{enumerate}

\end{prop}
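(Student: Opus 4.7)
The plan is to combine Lemmas \ref{L1}, \ref{L2}, and \ref{L3} with the canonical decomposition $\sigma = \sigma_0 + \mu$ described after Lemma \ref{L3}, where $\sigma_0 \in \Omega_{n,k}$ is narrow of type $(t,d)$ and $\mu$ is a partition of depth at most $t$. The condition $\sigma_{n+1-k} \geq 2$ is equivalent to $t \geq n+1-k$, and for $k \geq n$ every $\sigma \in \Sigma_n$ has depth at most $n \leq k$, so that $\Sigma_{n,k} = \Sigma_n$ in that range. The unifying idea is to track the length of the leading block of $-n/2$-entries in the weights $\lambda'_{(l)}(\sigma)$, since this length controls precisely how many times the forward and converse directions of Lemma \ref{L1} may be applied.

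For Case (1) with $k > n$ we treat three situations. The trivial diagram $\sigma = 0$ is handled by the Second Fundamental Theorem: $\inv \neq S(\p^-)$ precisely when $k > n$, so the natural projection $N(0^\sharp) = S(\p^-) \to L(0^\sharp) = \inv$ has non-zero kernel and $0 \in \Sigma^0_{n,k}$. For non-trivial $\sigma$ with $t > 0$, Lemma \ref{L3} gives $\sigma \in \Sigma^0_{n,n}$, and the forward direction of Lemma \ref{L1} extends this to $\sigma \in \Sigma^0_{n,k}$. For non-trivial column diagrams (type $(0,d)$ with $d \geq 1$), Lemma \ref{L2} places $\sigma$ in $\Omega^0_{n,n+1}$, which is then promoted to $\Omega^0_{n,k}$ for every $k \geq n+1$ via Lemma \ref{L1} forward.

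For Case (2) with $k \leq n$, the inclusion $\{\sigma \in \Sigma_{n,k} : \sigma_{n+1-k} \geq 2\} \subseteq \Sigma^0_{n,k}$ is established by first applying Lemma \ref{L3} at $k=n$ (whose hypothesis $t \geq 1$ holds since $t \geq n+1-k \geq 1$) to obtain $\sigma \in \Sigma^0_{n,n}$, and then applying the converse of Lemma \ref{L1} a total of $n-k$ times to descend to $\Sigma^0_{n,k}$. Each descent step requires the leading entry of the current weight to equal $-n/2$; Lemma \ref{L2} combined with one backward step of Lemma \ref{L1} shows that $\lambda'_{(n)}(\sigma_0)$ opens with $t-1$ copies of $-n/2$, while the $\mu$-shift from Lemma \ref{L3} only alters the last $t$ positions. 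The two inequalities $n-k \leq t-1$ (from the hypothesis $t \geq n+1-k$) and $n-k \leq n-t$ (from $t \leq d \leq k$) together guarantee that the first $n-k$ entries of $\lambda'_{(n)}(\sigma)$ are all $-n/2$, so the backward chain proceeds without obstruction.

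For the reverse inclusion in Case (2), assume $\sigma \in \Sigma^0_{n,k}$. The SFT rules out $\sigma = 0$ (since $\inv = S(\p^-)$ when $n \geq k$ forces $N(0^\sharp) = L(0^\sharp)$), so $\sigma$ is non-trivial. Iterating Lemma \ref{L1} forward from $k$ to $n+1$ yields $\sigma \in \Sigma^0_{n,n+1}$ and forces the first $n+1-k$ entries of $\lambda'_{(n+1)}(\sigma)$ to equal $-n/2$. On the other hand, Lemma \ref{L2} combined with Lemma \ref{L3} (the latter lifted to $k=n+1$ via Lemma \ref{L1} when $t > 0$, and with Lemma \ref{L2} used alone when $t = 0$) shows that position $t+1$ of $\lambda'_{(n+1)}(\sigma)$ is at most $-n/2-1$: its unshifted value is $-n/2-1$ or $-n/2-2$ by Lemma \ref{L2}, and any contribution of the $\mu$-shift at that position is non-positive. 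Hence $t+1 > n+1-k$, i.e., $\sigma_{n+1-k} \geq 2$. The principal obstacle is bookkeeping: checking that the shift indeed keeps position $t+1$ at most $-n/2-1$ in the regime $t > (n+1)/2$, when the shift can touch that position, ultimately reduces to the non-negativity $\mu_{n+1-t} \geq 0$, but aligning the Lemma \ref{L3} shift with the Lemma \ref{L2} block structure across all positions requires careful case analysis.
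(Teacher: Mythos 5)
Your proof is correct and follows exactly the route the paper intends: the paper states this proposition without proof as "a consequence of Lemmas \ref{L1}--\ref{L3}," and your argument is a valid filling-in of that derivation, including the two points the lemmas alone do not cover (the trivial diagram via the SFT and the paper's remark that $\sigma\in\Sigma^0_{n,k}$ is equivalent to reducibility of $N(\sigma^\sharp)$, and the leading-block bookkeeping for the descent/ascent in Lemma \ref{L1}). The two inequalities $n-k\le t-1$ and $n-k\le n-t$ you isolate are precisely what makes the backward chain work, and the observation that the $\mu$-shift is non-positive settles the reverse inclusion.
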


\subsection{Highest weight vectors in $\Ker{\phi}$} \label{hwvectors} 
Having described $\lambda_{(k)}^\prime(\sigma)$ for each $\sigma \in \Sigma_{n,k}^0$, we are finally prepared to find a corresponding relation in $\Ker{\phi_\sigma}$ of weight $\lambda_{(k)}^\prime(\sigma)$, which generates $\Ker{\phi_\sigma}$ as an $\mathfrak{sp}(2k)$-module. Each generating relation in dimension $n$ will be formed by a minor of order $n+1$ in a matrix of rank $n$. The results of the previous section allow us to consider only the case $k=n+1$ and $\sigma \in \Omega^0_{n,n+1}$, from which it is easy to derive generators of $\Ker{\phi^{(n,k)}}$ in the full non-stable range $n<2k-1$. 

Recall the bilinear form $(\cdot,\cdot)$ on $\C^n$ given in equation (\ref{symform}). Let $J_n$ denote the matrix of $(\cdot,\cdot)$, i.e.
\[J_n =
\begin{pmatrix}
    0 & 0 & \dots & 0 & 1 \\
    0 & 0 & \dots & 1 & 0 \\
    \vdots & \vdots & \vdots & \vdots & \vdots  \\
    0 & 1 & \dots & 0 & 0 \\
    1 & 0 & \dots & 0 & 0
\end{pmatrix}.  \] 
For any $n,k \in \N^+$, we define $M^{(n,k)}$ to be the symmetric $(n+k)\times (n+k)$ matrix
\[ M^{(n,k)} = \begin{pmatrix}
    x \\
    I_n
\end{pmatrix}J_n (x^T, I_n)
=   \begin{pmatrix}
r_{11} & \dots & r_{1k} & x_{1n} & \dots & x_{11} \\
       \vdots & \vdots & \vdots & \vdots & \vdots & \vdots \\
       r_{k1} & \dots & r_{kk} & x_{kn} & \dots & x_{k1} \\
       x_{1n} & \dots & x_{kn} & 0 & \dots & 1 \\
       \vdots & \vdots & \vdots & \vdots & \vdots & \vdots \\
       x_{11} & \dots & x_{k1} & 1 & \dots & 0
\end{pmatrix}
,\]
where $I_n$ is the $n\times n$ identity matrix. The rank of $M^{(n,k)}$ is clearly $n$ and hence all its minors of order $n+1$ are zero. Let $a,b > 0$ and define $M_{a,b} = M_{a,b}^{(n,k)}$ as the square submatrix of $M^{(n,k)} =(m_{ij})$ of order $n+1$ whose entry at position $(i,j)$ is equal to $m_{a+i-1,b+j-1}$. Then all relations in $\Ker{\phi^{(n,k)}}$ emerge from the minors $\det{M_{a,b}}$ with suitable choices of $a,b$. Here, we view $\det{M_{a,b}}$ as an element of $\inv \otimes \har$, i.e. we put the tensor product sign $\otimes$ between each product of invariants $r_{ij}$ and each product of variables $x_{ij}$. The expression $\det{M_{a,b}}$ is a non-zero element of $\inv \otimes \har$ for suitable $a$ and $b$, while $\phi(\det{M_{a,b}})$ is the zero polynomial. 

\begin{thm}
    Suppose that  \[\sigma = (2,\dots,2,1,\dots,1) \in \Omega^0_{n,n+1} \] is of type $(t,d)$ with $d>0$. Then \[\det{M^{(n,n+1)}_{d+1,t+1}} = \begin{vmatrix}
       r_{d+1,t+1} & \dots & r_{d+1,n+1} & x_{d+1,n} & \dots & x_{d+1,n+1-t} \\
       \vdots & \vdots & \vdots & \vdots & \vdots & \vdots \\
       r_{n+1,t+1} & \dots & r_{n+1,n+1} & x_{n+1,n} & \dots & x_{n+1,n+1-t} \\
       x_{t+1,n} & \dots & x_{n+1,n} & 0 & \dots & 0 \\
       \vdots & \vdots & \vdots & \vdots & \vdots & \vdots \\
       x_{t+1,n+1-d} & \dots & x_{n+1,n+1-d} & 0 & \dots & 0
    \end{vmatrix}\] 
    is a highest weight vector in $\Ker{\phi^{(n,n+1)}_\sigma}$ of weight $\lambda_{(n+1)}^\prime(\sigma)$.
\end{thm}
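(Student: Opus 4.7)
The plan is to verify directly that $\det M^{(n,n+1)}_{d+1,t+1}$ is a non-zero $\g$-highest weight vector of weight $\lambda^\prime_{(n+1)}(\sigma)$ lying in $\Ker \phi^{(n,n+1)}$, and then to invoke Proposition \ref{propgen} together with the Howe decomposition $\Ker \phi = \bigoplus_\tau E_\tau \otimes \Ker \phi_\tau$ to place the vector inside the isotypic component $E_\sigma \otimes \Ker \phi^{(n,n+1)}_\sigma$.

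First I would establish kernel membership, the weight, and non-vanishing. The factorisation $M^{(n,k)} = \binom{x}{I_n} J_n (x^T,I_n)$ exhibits $M^{(n,k)}$ as a product of matrices of inner dimension $n$, so every $(n+1)$-minor vanishes identically as a polynomial and $\phi^{(n,n+1)}(\det M^{(n,n+1)}_{d+1,t+1}) = 0$. For the weight, the displayed submatrix splits into four blocks aligned with the partition $\{1,\dots,n+1\} = \{1,\dots,t\}\sqcup\{t+1,\dots,d\}\sqcup\{d+1,\dots,n+1\}$ of row indices: a top-left block of quadratic invariants $r_{ij}$, two mixed blocks of single variables $x_{i,j}$, and a bottom-right zero block (vanishing because $t+d \leq n$ for narrow Young diagrams). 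Expanding along permutations and bookkeeping the contributions to each row-$i$ degree shows that every non-vanishing term carries the same degree vector: $0$ if $i \leq t$, $1$ if $t < i \leq d$, and $2$ if $d < i \leq n+1$. The resulting weight matches $\lambda^\prime_{(n+1)}(\sigma)$ by Lemma \ref{L2}. Non-vanishing is then witnessed by a specific permutation whose harmonic factor equals, up to sign, the highest weight harmonic $\delta_\sigma$, obtained by routing the bottom rows to $\delta_d$ and, if $t > 0$, the top-right block to $\delta_t$.

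The main technical step is the highest-weight property. For positive compact roots, realised as $h_{ij}$ with $i > j$ in the paper's convention, the derivation rule for $\det$ combined with the matrix identity $h_{ij}(M^{(n,k)}) = \tilde E_{ji}\, M^{(n,k)} + M^{(n,k)}\,\tilde E_{ij}$, where $\tilde E_{pq}$ denotes the embedding of the matrix unit $E_{pq}$ into the upper-left $k\times k$ corner, reduces $h_{ij}(\det M^{(n,n+1)}_{d+1,t+1})$ to a case analysis depending on which of the three index blocks contain $i$ and $j$: in each case either the replaced row or column falls outside the index range used by the submatrix (so its determinant contribution vanishes), or the modified submatrix acquires a linear dependence between two rows or columns. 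For the positive non-compact roots $\Delta_{ab}$, I would use the $\mathfrak{sp}(2k)$ commutation $[\Delta_{ab},r_{cd}] = \delta_{bc}h_{da} + \delta_{bd}h_{ca} + \delta_{ac}h_{db} + \delta_{ad}h_{cb}$ in $\A$ to push each $\Delta_{ab}$ across the invariant factors; modulo the parabolic $\U(\q)$ this reduces the action to $\kk$-elements acting on the harmonic factors, and the remaining sum collapses by the same row-degree matching that controlled the weight calculation. This non-compact verification is the main obstacle of the proof.

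To close, a non-zero $\g$-highest weight vector of weight $\lambda^\prime_{(n+1)}(\sigma)$ in $\Ker \phi^{(n,n+1)}$ must, by Proposition \ref{propgen}, live in the unique isotypic summand $E_\sigma \otimes \Ker \phi^{(n,n+1)}_\sigma$, since $\sigma$ is the only label $\tau \in \Sigma_{n,n+1}$ for which the first reducing module $Z_1^{\tau^\sharp}$ meets this weight. Hence $\det M^{(n,n+1)}_{d+1,t+1}$ lies in $\Ker \phi^{(n,n+1)}_\sigma$ and is, up to scalar, the generating highest weight vector of the kernel of $\phi^{(n,n+1)}_\sigma$.
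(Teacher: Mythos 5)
Your overall architecture (rank argument for kernel membership, block decomposition, weight bookkeeping, appeal to Proposition \ref{propgen}) matches the paper's, but two steps as you have arranged them contain genuine gaps. First, the non-vanishing argument: exhibiting one permutation whose contribution has harmonic factor $\pm\delta_\sigma$ does not show $\det M^{(n,n+1)}_{d+1,t+1}\neq 0$, because distinct permutations can produce the same monomial in the $r_{ij}$ tensored with cancelling harmonic terms. You need a no-cancellation argument; the paper's is to group the expansion into terms $D_{I,J}=(\det R^I_J)\otimes(\det Y_I)(\det Z_J)$, fix a $t$-subset $I$ and isolate the unique term whose invariant factor is the diagonal monomial $\prod_{i\in I}r_{ii}$, using the SFT (valid since $d>0$) to know these monomials are linearly independent in $\inv$. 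Second, and more importantly, your ``main technical step'' --- direct verification that the non-compact positive root vectors $\Delta_{ab}$ annihilate the determinant by pushing commutators $[\Delta_{ab},r_{cd}]$ across the invariant factors --- is only sketched, you yourself flag it as the main obstacle, and the claim that ``the remaining sum collapses by the same row-degree matching'' is not a proof. As written, the highest-weight property is not established.

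The good news is that this hard computation is unnecessary, and avoiding it is precisely the point of the paper's setup in Section \ref{generators}. Each block term $D_{I,J}$ of the Laplace expansion visibly lies in $\inv\otimes F_{\sigma^\sharp}$ (the products $\det Y_I\cdot\det Z_J$ belong to the $\mathfrak{gl}(k)$-module generated by $\delta_\sigma$), so the whole determinant lies in the domain $N_\A(\sigma^\sharp)=\inv\otimes F_{\sigma^\sharp}$ of $\phi_\sigma$ and hence in $\Ker\phi_\sigma$ --- no weight-uniqueness argument across labels $\tau\in\Sigma_{n,n+1}$ is needed to localize it (and your uniqueness claim for $Z_1^{\tau^\sharp}$ would itself require justification, since $\lambda'_{(n+1)}(\sigma)$ occurs as a non-highest weight in many other generalized Verma modules). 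Once the vector is known to be a non-zero element of $\Ker\phi_\sigma$ of weight $\lambda'_{(n+1)}(\sigma)$, Proposition \ref{propgen} finishes the job: $\Ker\phi_\sigma$ is a quotient of $Z_1^{\sigma^\sharp}=N(\lambda'_{(n+1)}(\sigma))$, whose $\lambda'$-weight space is one-dimensional and spanned by the canonical highest weight generator, so your vector is automatically a highest weight vector. In short: reorder your argument so that membership in $\Ker\phi_\sigma$ and the weight computation come first, and the highest-weight property falls out of the structure theory rather than requiring the $\Delta_{ab}$-computation you could not complete.
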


\begin{proof}
    Let us divide $M = M^{(n,n+1)}_{d+1,t+1}$ into the natural blocks:
    \[ M = \begin{pmatrix}
        R & Y \\
        Z^T & 0_{d\times t}
    \end{pmatrix}, \; R = (r_{ij})^{i \geq d+1}_{j \geq t+1}, \; Y = (x_{ij})^{i \geq d+1}_{j \geq n+1-t}, \; Z = (x_{ij})^{i \geq t+1}_{j \geq n+1-d}.  \]
    Strictly speaking, the matrices $Y$ and $Z$ are equal to the corresponding variable submatrices of $(x_{ij})$ only after reversing their columns. However, we only consider their minors and any potential change of sign does not affect the argument. 
    
    Using cofactor expansions in multiple rows and columns, we observe that $D_\sigma = \det{M_{d+1,t+1}}$ splits into a sum
    \[ D_\sigma = \sum_{I,J} \pm D_{I,J}, \quad D_{I,J} =   (\det{R^I_J}) \otimes (\det{Y_I})  (\det{Z_J}), \]
    where $I$ runs over $t$-element subsets of $\{d+1,d+2,\dots,n+1\}$ and $J$ runs over $d$-element subsets of $\{t+1,t+2,\dots,n\}$. Moreover, $\det{Y_I}$ denotes the $t\times t$ minor of $Y$ on rows indexed by $I$, $\det{Z_J}$ denotes the $d \times d$ minor of $Z$ on rows indexed by $J$, and $\det{R^I_J}$ denotes the minor of $R$ of order $n+1-d-t$ given by deleting rows in $I$ and columns in $J$. 
   
    As a prerequisite, notice that each term $D_{I,J}$ in $D_\sigma = \det{M_{d+1,t+1}}$ is an element of $\inv \otimes F_{\sigma^\sharp}$ by the description of harmonics in Section \ref{harmonics} and hence $D_\sigma \in \Ker{\phi_\sigma}$. We should also check that $D_\sigma \neq 0$. This follows from the fact that monomials in $r_{ij}$ of degree $n+1-d-t$ form a linearly independent set in $\inv$, which is a consequence of the SFT since $d > 0$. We can fix any $t$-element subset  $I \subset \{d+1,d+2,\dots,n+1\}$ and look at the non-zero term $T_I$ in $D_\sigma$ of the form
    \[ T_I = \pm  (\prod_{i \in I} r_{ii}) \otimes (\det{Y_I})(\det{Z_{K \cup I}}),\]
    where $K = \{t+1,t+2,\dots,d\}$. Then there are no other terms in $D_\sigma$ having $\prod_{i \in I} r_{ii}$ as the left factor. Hence the non-zero term $T_I$ cannot get cancelled out by any of the remaining terms in $D_\sigma$ and so $D_\sigma \neq 0$.
    
    To prove the theorem, it is now sufficient to check that for each $I,J$ it holds that
    \[ h_{ii} \cdot D_{I,J} = \begin{cases}
        \frac{n}{2} D_{I,J}, \quad \text{if} \quad i = 1,\dots,t, \\
        (1+\frac{n}{2})D_{I,J}, \quad \text{if} \quad i = t+1,\dots,d, \\
        (2+\frac{n}{2})D_{I,J}, \quad \text{if} \quad i = d+1,\dots,n+1.
    \end{cases}\]
     This is quite straightforward since the scalar action of $h_{ii}$ on $\det{Y_I}\det{Z_J} \in F_{\sigma^\sharp}$ is captured in $\sigma^\sharp$, while the scalar action of $h_{ii}$ on a product $\prod_{a,b}r_{ab}$ is equal to the sum $\Sigma_{a,b}(-\varepsilon_a - \varepsilon_b)$ of the corresponding non-compact roots evaluated on $h_{ii}$. In particular, the eigenvalue of $h_{ii}$ acting on $\det{R^I_J}$ is equal to $0$ if $i \in I \cap J$, it is equal to $2$ if $i \notin I \cup J$ and it equals $1$ otherwise. 
\end{proof}

The lemmas of the previous section enable the following quick generalization to any number of variables and any Young diagram. Recall the definition (\ref{hwharmonic}) of the highest weight harmonic $\delta_\sigma \in F_{\sigma^\sharp}\subset \har(M_{k \times n})$ for $\sigma \in \Sigma_{n,k}$.

\begin{cor} \label{maincor}
    Let $\sigma_0 \in \Omega_{n,k}^0$ be of type $(t,d)$ and let $\mu = (\mu_1,\dots,\mu_t)$ be a partition. Put $\sigma = \sigma_0 + \mu$. Then $\sigma \in \Sigma^0_{n,k}$ and
    \[ (\det{M_{d-n+k,t-n+k}})\delta_\mu \]
    is a highest weight vector in $\Ker{\phi_\sigma^{(n,k)}}$ of weight $\lambda_{(k)}^\prime(\sigma)$.
\end{cor}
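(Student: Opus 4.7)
The plan is to reduce the corollary to the preceding theorem and the technical lemmas of this section. For the first step, to confirm $\sigma \in \Sigma^0_{n,k}$ and identify the candidate weight $\lambda^\prime_{(k)}(\sigma)$: working in the reference case $k = n$, Lemma~\ref{L3} gives $\sigma_0 + \mu \in \Sigma^0_{n,n}$ together with an explicit coordinate-shift formula for $\lambda^\prime_{(n)}(\sigma)$ in terms of $\lambda^\prime_{(n)}(\sigma_0)$; iterating Lemma~\ref{L1} transfers this to arbitrary $k$ by prepending or stripping copies of $-n/2$, which also supplies a closed formula for $\lambda^\prime_{(k)}(\sigma)$.

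Next, vanishing under $\phi^{(n,k)}$ is immediate from rank considerations. By construction, $M^{(n,k)}$ factors through an $n$-dimensional middle space, so its rank is at most $n$ and every $(n+1)\times(n+1)$ minor vanishes identically as a polynomial. Writing the minor as $\sum_\alpha I_\alpha \otimes H_\alpha$ in $\inv \otimes \har$, one has $\sum_\alpha I_\alpha H_\alpha = 0$; multiplying the harmonic factors by $\delta_\mu$ and applying $\phi$ still yields $0$.

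The third step is to verify that the element lies in $\inv \otimes F_{\sigma^\sharp}$, is nonzero, and is a $\g$-highest weight vector of weight $\lambda^\prime_{(k)}(\sigma)$. Expanding the minor by cofactors in its last $t$ columns and last $d$ rows, as in the previous theorem, produces a sum $\sum_{I,J}\pm(\det R^I_J)\otimes(\det Y_I)(\det Z_J)$ of elements of $\inv\otimes F_{\sigma_0^\sharp}$. Multiplying by $\delta_\mu$ and using the identity $\delta_\sigma = \delta_{\sigma_0}\delta_\mu$ together with the fact that $\delta_\mu$ involves only rows $k-t+1,\dots,k$, I would show each product $(\det Y_I)(\det Z_J)\delta_\mu$ actually lies in $F_{\sigma^\sharp}$. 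The distinguished term $\bigl(\prod_i r_{ii}\bigr)\otimes\delta_\sigma$ cannot cancel with any other (by the SFT, using $d>0$), giving nonvanishing; the $h_{ii}$-eigenvalues are then computed exactly as in the theorem but augmented by the weight contribution of $\delta_\mu$, yielding $\lambda^\prime_{(k)}(\sigma)$.

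I expect the hardest part to be the third step, specifically showing that every summand $(\det Y_I)(\det Z_J)\delta_\mu$ lands in the single irreducible submodule $F_{\sigma^\sharp}\subset\har$ rather than in a larger $\kk$-isotypic component of the product. The cleanest route is probably to realize each such product as a $\kk$-translate of $\delta_\sigma$ via an explicit polarization, exploiting the Pieri-style compatibility between the $\kk$-action on the harmonic polarizations of $\delta_{\sigma_0}$ and the multiplicative structure of $\delta_\mu$; this also automatically handles annihilation by the $\kk$-raising operators and, together with the harmonicity of each summand, by $\p^+$.
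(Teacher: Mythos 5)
Your proposal is correct and follows the paper's own route: the paper likewise derives the corollary by combining Lemmas \ref{L1} and \ref{L3} with the preceding theorem, reducing everything to the case $k=n+1$ and narrow $\sigma$. The only difference is explicitness — the paper's proof is a three-line reduction, whereas you re-run the theorem's cofactor-expansion verification with the extra factor $\delta_\mu$ (using $\delta_\sigma=\delta_{\sigma_0}\delta_\mu$ and the polarization description of $F_{\sigma^\sharp}$), thereby spelling out details, notably the membership of each summand in $\inv\otimes F_{\sigma^\sharp}$, that the paper leaves implicit.
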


\begin{proof}
    First, put $k=n$ and $\mu = 0$ and apply Lemma \ref{L1} to obtain the result of the theorem for narrow Young diagrams in the case of $n$ variables. Then for any $\mu$ apply Lemma \ref{L3}. Finally, apply Lemma \ref{L1} again to generalize to any $k$ in the non-stable range $n<2k-1$.
\end{proof}

As a very special case, we obtain a constructive proof of Theorem \ref{mainthm}. Indeed, there is a non-zero element $\det{M^{(n,k)}_{1,1}}$ in $\Ker{\phi^{(n,k)}}$ in the boundary non-stable cases $n=2k-2$ or $n=2k-3$, so $\Ker{\phi^{(n,k)}} \neq 0$ whenever $n<2k-1$ by Lemma \ref{embedding}.

\subsection{Concluding remarks}
Let us finish with a brief discussion of what is known and what can be done further. We also include a small, but interesting result in Corollary \ref{corollary}.

\begin{rem}
    A result analogous to Corollary \ref{maincor} was presented by R. Howe during his lectures \cite{Ho3} delivered in Prague in 2015, where he described generators of the kernel of a multiplication map with underlying symmetry given by the group $GL(n,\C)$. The author was unable to find a published version of the result. The theoretical apparatus mentioned in the lectures is presented in \cite{HKL}.
\end{rem}

\begin{rem}
  For each $\sigma \in \Sigma_{n,k}^0$, we can act on the highest weight vector $D_\sigma^1 =  (\det{M_{d-n+k,t-n+k}})\delta_\mu$ from Corollary \ref{maincor} by operators $h_{ij}$ with $i < j$ until we obtain enough linearly independent relations $D_\sigma^1,\dots,D_\sigma^s \in \Ker{\phi_\sigma}$, i.e. with $s=\dim{F_{\lambda^\prime}}$ and $\lambda^\prime = \lambda^\prime_{(k)}(\sigma)$. Then it follows that $\Ker{\phi_\sigma}$ is a quotient of the free $\inv$-module on $D_\sigma^1,\dots,D_\sigma^s$. This set is moreover a minimal set of $\inv$-module generators of the kernel. For an explicit construction of the basis $D_\sigma^1,\dots,D_\sigma^s$ from the highest weight vector $D_\sigma^1$, we refer to \cite{MY}.   
\end{rem}

By exhibiting non-zero elements of $\Ker{\phi_\sigma^{(n,k)}}$ for each non-trivial $\sigma \in \Sigma_{n,k}^0$, we have proved the difficult implication in the following criterion for irreducibility of generalized Verma modules $N_\A(\lambda)$ for the algebra $\A$ of $O(n)$-invariant differential operators on $\pol(M_{k\times n})$. The easy claim is that irreducibility of $N(\lambda)$ implies irreducibility of $N_\A(\lambda)$ as in this case both must be isomorphic to $L(\lambda)$.

\begin{cor} \label{corollary}
  Let $\sigma \in \Sigma_{n,k}$ be a non-trivial Young diagram. It holds that $\sigma \in \Sigma^0_{n,k}$ if and only if $\Ker{\phi_\sigma^{(n,k)}} \neq 0$. In other words, the generalized Verma module $N(\sigma^\sharp)$ is reducible if and only if the generalized Verma module $N_\A(\sigma^\sharp)$ for $\A$ is reducible.
\end{cor}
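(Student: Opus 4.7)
The statement splits into two equivalences that can be handled separately. The translation between the two formulations (the statement about $\Sigma^0_{n,k}$ and the statement about reducibility) comes from the fact that $\phi_\sigma: N_\A(\sigma^\sharp) \to L(\sigma^\sharp)$ is the canonical projection onto the irreducible quotient, so $\Ker{\phi_\sigma}$ vanishes precisely when $N_\A(\sigma^\sharp)$ is already irreducible. Combined with the characterization of $\Sigma^0_{n,k}$ in terms of reducibility of $N(\sigma^\sharp)$ (immediately after the definition of $\Sigma^0_{n,k}$), the corollary reduces to the claim: $N(\sigma^\sharp)$ is reducible if and only if $N_\A(\sigma^\sharp)$ is reducible.

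The easy direction is the implication $\sigma \notin \Sigma^0_{n,k} \Rightarrow \Ker{\phi_\sigma^{(n,k)}} = 0$. The plan is to invoke Theorem \ref{resolution} directly: if $\Gamma^k_{\sigma^\sharp}$ is empty, then $W_\lambda$ is trivial (for $\lambda = \sigma^\sharp$), which forces $r_\lambda = 0$ and hence the resolution collapses to $0 \to N(\sigma^\sharp) \to L(\sigma^\sharp) \to 0$. Thus $N(\sigma^\sharp) \cong L(\sigma^\sharp)$ is irreducible. Now factor the projection $\widetilde{\phi_\sigma}: N(\sigma^\sharp) \to L(\sigma^\sharp)$ through $\pi_\sigma: N(\sigma^\sharp) \twoheadrightarrow N_\A(\sigma^\sharp)$ as in the commutative diagram earlier in the paper. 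Since $N(\sigma^\sharp)$ is irreducible and $\pi_\sigma$ is nonzero, $\pi_\sigma$ is an isomorphism, so $N_\A(\sigma^\sharp) \cong L(\sigma^\sharp)$ and $\phi_\sigma$ is an isomorphism.

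The harder direction, $\sigma \in \Sigma^0_{n,k} \Rightarrow \Ker{\phi_\sigma^{(n,k)}} \neq 0$, is where all the constructive work of Section \ref{hwvectors} is used. Given a non-trivial $\sigma \in \Sigma^0_{n,k}$, I will use the decomposition explained just after Lemma \ref{L3}: write $\sigma = \sigma_0 + \mu$ uniquely with $\sigma_0 \in \Omega_{n,k}$ the narrow Young diagram of type $(t,d)$ where $t = \max\{i : \sigma_i \geq 2\}$ and $d = \operatorname{depth}(\sigma)$, and $\mu = (\sigma_1 - 2, \ldots, \sigma_t - 2)$. The remark after Lemma \ref{L3} (derived from Lemmas \ref{L1} and \ref{L3}) guarantees that $\sigma_0 \in \Omega^0_{n,k}$ as soon as $\sigma \in \Sigma^0_{n,k}$. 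Applying Corollary \ref{maincor} to this $\sigma_0$ and $\mu$ produces the explicit nonzero highest weight vector $(\det M_{d-n+k,\,t-n+k})\,\delta_\mu \in \Ker{\phi_\sigma^{(n,k)}}$, which gives what we need.

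The main obstacle, in principle, is the hard direction — but all the heavy lifting (the root-system case analysis, the reduction to narrow diagrams, the explicit determinantal formula, and the verification that it is a nonzero kernel element of the correct weight) has already been carried out in Lemmas \ref{L1}--\ref{L3}, the theorem on narrow diagrams, and Corollary \ref{maincor}. So the proof of this corollary is short: it amounts to assembling these pieces and noting that the trivial Young diagram is excluded by hypothesis (which rules out the irrelevant case where $\lambda^\prime$ would coincide with $\sigma^\sharp$ and the multiplication map $\phi_\sigma$ is trivially an isomorphism onto $\inv$).
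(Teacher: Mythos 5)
Your proof is correct and takes essentially the same route as the paper: the hard direction is exactly the invocation of Corollary \ref{maincor} via the decomposition $\sigma = \sigma_0 + \mu$ (with $\sigma_0\in\Omega^0_{n,k}$ guaranteed by the remark after Lemma \ref{L3}), and the easy direction is the observation that irreducibility of $N(\sigma^\sharp)$ forces the nonzero quotient $N_\A(\sigma^\sharp)$ to be isomorphic to $L(\sigma^\sharp)$. The paper compresses this into two sentences preceding the corollary; you have merely spelled out the same steps.
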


 In Corollary \ref{maincor}, we gave a list of generators of the kernel of $\phi$. It remains an open problem to understand relations among these generators. More precisely, we would like to understand the $\inv$-module structure of each $\Ker{\phi_\sigma}$. Propositions \ref{eventhm} and \ref{oddthm} describe this structure when $n=2k-2$ or $n=2k-3$. One can go further and calculate the resolutions of the kernel by generalized Verma modules in the range $k \leq n < 2k-3$. In the remaining cases $n<k$, it is unclear what the structure of $\Ker{\phi}$ might look like. To finish on a more positive note, let us include an example of a situation in which we have all the information.

\begin{ex}
   Consider the case of dimension $n=2$ with $k=2$ variables. We have $\Omega^0_{2,2}=\{(2)\}$ and hence the only minor of $M^{(2,2)}$ to consider is the principal $3\times 3$ minor
    \[ D_{(2)} = \det{\begin{pmatrix}
    r_{11} & r_{12} & x_{12} \\
    r_{12} & r_{22} & x_{22} \\
    x_{12} & x_{22} & 0
\end{pmatrix}} = 2r_{12}\otimes x_{12}x_{22} - r_{11}\otimes x_{22}^2 - r_{22}\otimes x_{12}^2.\]
For $\sigma  = (a +2) \in \Sigma^0_{2,2}$ with $a\geq 0$, we multiply by $x_{22}^a$ from the right to obtain
\[ D_\sigma = D_{(2)}x_{22}^a\] as the highest weight vector in $\Ker{\phi_\sigma}$ of weight $\lambda^\prime(\sigma)=(-3,-3-a)$. Acting by $h_{12}$ on $D_\sigma$ and using Proposition \ref{eventhm}, we conclude that $\Ker{\phi_\sigma}$ is the free $\inv$-module generated by 
\[ D_{(2)}x_{12}^i x_{22}^{a-i}, \quad i = 0,1,\dots,a.\]
\end{ex}

\subsection*{Acknowledgments}
The author is sincerely grateful for guidance, advice and valuable feedback from Roman L\' avi\v cka.

\def\bibname{Bibliography}


\begin{thebibliography}{99}
	\addcontentsline{toc}{chapter}{\bibname}

\bibitem{B} D. Be\v{d}at\v s: Howe duality and invariant differential equations. Master thesis, Charles University, Faculty of Mathematics and Physics, Mathematical Institute, Praha, Czech Republic (2023).

\bibitem{CZ} S.J. Cheng, R.B. Zhang: Howe duality and combinatorial character formula for orthosymplectic Lie superalgebras, Advances in Mathematics, 182, 1 (2004), pp. 124-172. https://doi.org/10.1016/S0001-8708(03)00076-8

\bibitem{DES} M.G. Davidson, T.J. Enright, R.J. Stanke: Differential operators and highest weight representations, Memoirs of the AMS, 94, 455 (1991).

\bibitem{D} J. Dixmier: Enveloping algebras, North-Holland Publishing Company, Amsterdam (1977).

\bibitem{EW} T.J. Enright, J.F. Willenbring: Hilbert series, Howe duality and branching for classical groups, Ann. Math. (2), 159, 1 (2004), pp. 337-375. https://doi.org/10.4007/annals.2004.159.337

 \bibitem{G} R.\ Goodman: Multiplicity-free spaces and Schur-Weyl-Howe duality,  in Representations of Real and p-adic Groups (ed. E-C Tan and C-B Zhu),
	Lecture Note Series--Institute for Mathematical Sciences, Vol. 2,
	World Scientific, Singapore (2004), pp. 304-415.

\bibitem{GW} R. Goodman, N.R. Wallach: Symmetry, Representations, and Invariants, Springer-Verlag, New York (2009).


\bibitem{Ho1} R. Howe: Remarks on classical invariant theory, Trans. AMS, 313, 2 (1989), pp. 539-570. https://doi.org/10.2307/2001418

\bibitem{Ho2} R. Howe: Perspectives on invariant theory: Schur duality, multiplicity-free actions and beyond, The Schur lectures, 1992, Tel Aviv (1995), pp. 1-182.

\bibitem{Ho3} R. Howe: Four stages of Classical Invariant Theory [Lecture notes], Prague (2015). 

\bibitem{HKL} R. Howe, S. Kim, S.T. Lee: Standard Monomial Theory for Harmonics in Classical Invariant Theory, in Representation Theory, Number Theory, and Invariant Theory, Progress in Mathematics 323, Birkhäuser, Cham, Germany (2017), pp. 265-302.


 \bibitem{Hum} J.E. Humphreys: Representations of Semisimple Lie Algebras in the BGG Category $\mathcal O$,
 AMS, Providence, RI (2008).

 \bibitem{Lav} R. L\'avi\v cka: Separation of {V}ariables in the {S}emistable {R}ange, in Topics in Clifford Analysis: Special Volume in Honor of Wolfgang Spr{\"o}{\ss}ig, Springer International Publishing, Cham, Germany (2019), pp. 395-403. 

 \bibitem{MY}  A. Molev, O. Yakimova: Monomial Bases and Branching Rules, Transformation Groups 26 (2021), pp. 995–1024. https://doi.org/10.1007/s00031-020-09585-1

 \bibitem{MW} I.M. Musson, J.F. Willenbring: Invariant Differential Operators and FCR Factors of Enveloping Algebras, Communications in Algebra, 6:10 (2008), pp. 3759-3777. https://doi.org/10.1080/00927870802160529

 \bibitem{S} G.W. Schwarz: Finite-dimensional representations
of invariant differential operators, Journal of Algebra 258 (2002), pp. 160-204. https://doi.org/10.1016/S0021-8693(03)00281-3


\end{thebibliography}
\end{document}